\newcommand{\R}{\mathbb{R}}
\newcommand{\lap}{\mbox{$\Delta$}}
\newcommand{\grad}{\mbox{$\nabla$}}
\newcommand{\la}{\lambda}
\newcommand{\be}{\begin{equation}}
\newcommand{\ee}{\end{equation}}
\journal{Journal of \LaTeX\ Templates}
\newtheorem{thm}{Theorem}[section]
 \newtheorem{lem}{Lemma}[section]
 \newtheorem{rem}{Remark}[section]
\begin{document}
\begin{frontmatter}

\title{\bf Hopf's lemmas for parabolic fractional Laplacians and parabolic fractional $p$-Laplacians  }


\author[mymainaddress,mysecondaryaddress]{Pengyan Wang}
\ead{wangpy119@126.com}

\author[mymainaddress]{Wenxiong Chen\corref{mycorrespondingauthor}}
\cortext[mycorrespondingauthor]{Corresponding author}
\ead{wchen@yu.edu}

\address[mymainaddress]{Department of   Mathematical  Sciences,   Yeshiva
University \\
 New York, NY, 10033, USA }
 \address[mysecondaryaddress]{School of   Mathematics and Statistics,
Xinyang Normal University \\
Xinyang,    464000, China}

\begin{abstract}

 In this paper,  we first
 establish Hopf's lemmas for   parabolic fractional equations and parabolic fractional $p$-equations.  Then we derive an asymptotic Hopf's lemma for antisymmetric solutions  to parabolic fractional equations.
We believe that these  Hopf's lemmas will become powerful tools in    obtaining qualitative properties of   solutions for nonlocal parabolic equations.
\end{abstract}

\begin{keyword}
Parabolic fractional Laplacians \sep parabolic fractional $p$-Laplacians  \sep    Hopf's lemmas \sep sub-solutions, \sep maximum principles.
\MSC[2010]  35K55 \sep 35B09 \sep 35B50
\end{keyword}
\end{frontmatter}

\section{Introduction}
Hopf's   lemma is a classic result in analysis, dating back to the discovery of the maximum principles for harmonic functions \cite{H}, and it has become a fundamental and powerful tool in the study of partial differential equations \cite{B,LN,PW}.

In the past few decades,  elliptic equations involving either local or nonlocal operators have been extensively studied by many scholars, a number of systematic approaches have been established
to explore   qualitative properties of  solutions, such as \emph{the method of moving planes  in integral forms} \cite{CLO}, \emph{the extension method} \cite{CS}, \emph{the method of moving planes}
\cite{CL,CLL,CLLg,CLM,CLZ,CQ,CT,CZ1,DLL,DQ,L1,LZ2,LyL,WN,WW}, \emph{the method of moving spheres} \cite{CLZr,LZm}, and \emph{the sliding methods} \cite{LiuC,CW3,DQW,WC1}.

Hopf's lemma is a powerful tool in carrying out the method of moving planes to derive symmetry, monotonicity, and non-existence of solutions for elliptic partial differential equations.

Recently, with the extensive study of fractional  Laplacian and fractional $p$-Laplacian,  some  fractional version of Hopf's lemma  have been established. For instance, Li and Chen \cite{LC} introduced a fractional version of a Hopf's lemma for anti-symmetric functions which can be applied immediately to the method of moving planes to establish qualitative properties, such as symmetry and montonicity of solutions for fractional equations. Jin and Li \cite{JL} derived a Hopf's lemma for a fractional $p$-Laplacian.  Chen,  Li and Qi \cite{CLQ} obttained a Hopf's type lemma for positive weak super-solutions of the
fractional $p$-Laplacian equations with Dirichlet conditions.

So far as we aware, not much is known concerning Hopf type lemmas for parabolic equations involving nonlocal elliptic operators. This is a motivation of the present paper.
Here, we   investigate the following  parabolic fractional $p$-equations
$$ \frac{\partial u}{\partial t}+(-\lap)_p^su(x,t)=f(t,u(x,t)),~(x,t)\in \Omega \times (0,\infty),$$
where $0<s<1,~2\leq p<\infty,$ and  $\Omega$ is either a bounded or an unbounded domain in $\mathbb R^n$.
   For each fixed $t>0$,
 $$(-\lap)^s_p u(x,t)=C_{n,sp}P.V.\int_{\mathbb R^n} \frac{|u(x,t)-u(y,t)|^{p-2}(u(x,t)-u(y,t))}{|x-y|^{n+sp}}dy,$$
 where $P.V.$ stands for the Cauchy principal value. It is easy to see that    for $u \in C^{1,1}_{loc} (\mathbb R^n)\cap   {\cal L}_{sp } ,$ $(-\lap)_p^su$ is well defined, where $$ {\mathcal L}_{sp}=\{u(\cdot, t) \in L^{p-1}_{loc} (\mathbb{R}^n) \mid \int_{\mathbb R^N} \frac{|u(x,t)|^{p-1}}{1+|x|^{n+sp}}dx<+\infty\}.$$

In the special case when $p = 2$,  $(-\lap)^s_p$ becomes the well-known fractional Laplacian $(-\lap)^s $.
We first obtain  the following

\begin{thm}\label{thmo2}(A Hopf's lemma for parabolic fractional Laplacians and fractional $p$-Laplacians)
Assume that $u(x,t)\in (C^{1,1}_{loc}(\Omega)\cap {\cal L}_{sp})\times C^1((0,T])$ is a positive solution to
\begin{equation}\label{eq:o2}
\left\{\begin{array}{ll}
 \frac{\partial u}{\partial t}+(-\lap)_p^su(x,t)=f(t,u(x,t)),   &( x,t) \in \Omega\times (0,T],\\
 u(x,t)=0, & ( x,t) \in \Omega^c\times (0,T],
\end{array}
\right.\end{equation}
where $0<s<1,~p\geq2$ and $\Omega$ is a bounded domain in $\mathbb R^n$ with smooth boundary.
Assume that
\be \label{01}
  f(t,0)=0,~t\in (0,T),~  f ~\mbox{is Lipschitz continuous in}~   ~u ~\mbox{uniformly for}~t.
\ee
Suppose that at a   point $x_0 \in   \partial \Omega$, tangent to which  a sphere in $\Omega$ can be constructed.  Then there exists a positive constant $c_0$, such that  for any $t_0\in (0,T)$   and  for all $x$ near  the boundary of $\Omega$,    we have
$$ u(x,t_0)\geq c_0 d^s(x),$$
where $d(x)=dist(x,\partial \Omega) $. It follows that
$$\frac{\partial u}{\partial \nu^s}(x_0, t_0) <0 \; (\mbox{ may possibly be } -\infty), ~\forall~t_0\in (0,T),$$
where $\nu$ is the outward normal of $\partial \Omega$ at $x_0$ and $\frac{\partial u}{\partial \nu^s}$ is the derivative of fractional order $s$.
\end{thm}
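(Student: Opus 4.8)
The plan is to obtain the pointwise lower bound $u(x,t_0)\ge c_0\,d^s(x)$ by comparing $u$ with a time-dependent sub-solution built from the interior sphere, and then to read off the fractional normal derivative. Fix $t_0\in(0,T)$ and put $\tau:=t_0/2$. Using the interior sphere condition, choose a ball $B_{2\rho}(\bar x)\subset\Omega$ tangent to $\partial\Omega$ at $x_0$, where $\rho>0$ will be taken small. Since $u$ is positive and continuous on the compact set $\overline{B_\rho(\bar x)}\times[\tau,t_0]\subset\Omega\times(0,T]$, the number $m:=\min_{\overline{B_\rho(\bar x)}\times[\tau,t_0]}u$ is strictly positive; moreover $u\ge0$ on all of $\R^n$, since $u>0$ in $\Omega$ and $u=0$ on $\Omega^c$. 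All the analysis will take place on the annular cylinder $\mathcal A\times(\tau,t_0]$ with $\mathcal A:=B_{2\rho}(\bar x)\setminus\overline{B_\rho(\bar x)}$; note that for $x\in\mathcal A$ on the inner normal through $x_0$ one has $d(x)=2\rho-|x-\bar x|$.

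For the barrier I would use the additive form $\underline u(x,t):=\psi(x)-g(t)$, where $g$ is the affine function with $g(\tau)=m$, $g(t_0)=0$ (so $0\le g\le m$ on $[\tau,t_0]$ and $g'\equiv -m/(t_0-\tau)$), and $\psi$ is a spatial profile with the three properties: (i) $0\le\psi\le m$ on $\R^n$, $\psi\equiv m$ on $\overline{B_\rho(\bar x)}$ and $\psi\equiv0$ on $\R^n\setminus B_{2\rho}(\bar x)$; (ii) $\psi(x)\ge c_0\,(2\rho-|x-\bar x|)^s$ for $x\in\mathcal A$ close to $\partial B_{2\rho}(\bar x)$, for some $c_0>0$; (iii) $(-\lap)_p^s\psi\le -C_\#$ on $\mathcal A$, where $C_\#:=m\bigl(\tfrac1{t_0-\tau}+L\bigr)$ and $L$ is the Lipschitz constant of $f$ in $u$ (see \eqref{01}). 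Granting (i)--(iii), a direct computation shows that $\underline u$ is a sub-solution on $\mathcal A\times(\tau,t_0]$: as $(-\lap)_p^s$ is invariant under adding an $x$-independent constant, $(-\lap)_p^s\underline u(\cdot,t)=(-\lap)_p^s\psi$; also $\frac{\partial\underline u}{\partial t}=-g'=\tfrac{m}{t_0-\tau}$ and $f(t,\underline u)\ge -L|\underline u|\ge -Lm$ because $|\underline u|=|\psi-g|\le m$; hence $\frac{\partial\underline u}{\partial t}+(-\lap)_p^s\underline u-f(t,\underline u)\le \tfrac{m}{t_0-\tau}+Lm+(-\lap)_p^s\psi\le0$ by (iii). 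Moreover $\underline u\le u$ on the parabolic boundary of $\mathcal A\times(\tau,t_0]$: on $\overline{B_\rho(\bar x)}\times(\tau,t_0]$, $\underline u=m-g\le m\le u$; on $(\R^n\setminus B_{2\rho}(\bar x))\times(\tau,t_0]$, $\underline u=-g\le0\le u$; and on $\mathcal A\times\{\tau\}$, $\underline u=\psi-m\le0\le u$.

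Invoking the comparison (maximum) principle for the parabolic fractional $p$-Laplacian then gives $u\ge\underline u$ on $\mathcal A\times(\tau,t_0]$. Taking $t=t_0$, where $g(t_0)=0$, gives $u(x,t_0)\ge\psi(x)\ge c_0\,(2\rho-|x-\bar x|)^s$ near $\partial B_{2\rho}(\bar x)$; carrying this out with the interior ball placed on the normal through the nearest boundary point to $x$ (balls of a fixed radius $2\rho$ exist at every boundary point because $\partial\Omega$ is smooth) produces $u(x,t_0)\ge c_0\,d^s(x)$ for all $x$ near $\partial\Omega$, with $c_0$ independent of the boundary point. Finally, since $u(x_0,t_0)=0$ and $u(x_0-r\nu,t_0)\ge c_0 r^s$ for small $r>0$ with $d(x_0-r\nu)=r$, the fractional normal derivative $\frac{\partial u}{\partial\nu^s}(x_0,t_0)=-\lim_{r\to0^+}u(x_0-r\nu,t_0)/r^s$ (with the convention that it equals $-\infty$ when this ratio tends to $+\infty$) satisfies $\frac{\partial u}{\partial\nu^s}(x_0,t_0)\le -c_0<0$.

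The heart of the matter, and the step I expect to be the main obstacle, is constructing $\psi$ so that (ii) and (iii) hold simultaneously: (ii) wants $\psi$ large near $\partial B_{2\rho}(\bar x)$ while (iii) wants $(-\lap)_p^s\psi$ very negative there. The mechanism is that the plateau $\psi\equiv m$ on $B_\rho(\bar x)$ forces a contribution of size $\sim -m^{p-1}\rho^{-sp}$ into $(-\lap)_p^s\psi$ at points of $\mathcal A$, which, after taking $\rho$ small and the transition part of $\psi$ suitably steep, dominates the terms making up $C_\#$; while prescribing $\psi\asymp(2\rho-|x-\bar x|)^s$ near $\partial B_{2\rho}(\bar x)$ is the decay for which the remaining ``boundary-local'' part of $(-\lap)_p^s\psi$ stays bounded. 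For $p=2$ the latter rests on the half-space identity $(-\lap)^s(x_n)_+^s\equiv0$ in $\{x_n>0\}$ plus a curvature correction; for $p>2$ one must additionally handle the degeneracy of $(-\lap)_p^s$ and the non-zero (but, in this regime, favorably signed) boundary contribution of the $p$-operator on $(x_n)_+^s$ --- which is exactly the kind of barrier estimate underpinning the elliptic Hopf lemmas for $(-\lap)_p^s$ recalled in the introduction, and can be invoked or adapted. A secondary point is to justify the comparison principle on $\mathcal A\times(\tau,t_0]$ in the appropriate (weak or viscosity) sense for $p\ge2$, presumably established earlier in the paper.
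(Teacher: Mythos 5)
Your overall strategy --- a sub-solution consisting of a ``plateau'' over an interior set plus a $d^s$-shaped profile on the tangent ball, compared with $u$ via a parabolic maximum principle --- is exactly the paper's, and your additive time factor $\psi(x)-g(t)$ is if anything cleaner than the paper's multiplicative cutoff $\eta(t)$, since $(-\lap)_p^s$ ignores additive constants in $x$. But the proposal stops short at precisely the two places where the work is. First, you never construct a $\psi$ satisfying your condition (iii), and for $p>2$ this is the crux: $(-\lap)_p^s$ is not additive, so one cannot estimate the fractional $p$-Laplacian of ``plateau plus profile'' by treating the two pieces separately. The paper resolves this by taking $\underline u=\chi_D(x)u(x,t)+\varepsilon(1-|x|^2)_+^s\eta(t)$ and decomposing the singular integral explicitly, using the elementary inequality $G(z_2)-G(z_1)\ge C(z_2-z_1)^{p-1}$ (its Lemma 2.1) to show that the cross-term contributed by the plateau is $\le -C_1$ with $C_1$ independent of $\varepsilon$, while the profile contributes only $C\varepsilon^{p-1}$ by the known bound $(-\lap)_p^s(1-|x|^2)_+^s\le C$ from Li--Zhang (and, for $p=2$, Getoor's identity $(-\lap)^s(1-|x|^2)_+^s=a$). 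There is also a quantitative trap in your set-up: your plateau height is $m=\min_{\overline{B_\rho(\bar x)}\times[\tau,t_0]}u$, which shrinks as $\rho\to0$ since the tangent ball collapses onto $\partial\Omega$, and for $p>2$ the plateau's contribution scales like $m^{p-1}\rho^{-sp}$, which must still dominate $C_\#\sim m(\tfrac{1}{t_0-\tau}+L)$; this forces a delicate interplay between $\rho$ and $m$ that the paper sidesteps by attaching the small parameter $\varepsilon$ only to the profile, keeping the plateau (hence $C_1$) fixed.

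Second, the comparison step is not as free as you suggest. Knowing that $\underline u$ is a sub-solution of the semilinear equation and $u$ is a solution does not directly yield $u\ge\underline u$ from the paper's maximum principles (Theorems 2.1 and 2.3), whose hypothesis is $\partial_t w+(-\lap)_p^su-(-\lap)_p^s\underline u\ge 0$: subtracting the equations leaves $f(t,u)-f(t,\underline u)\ge -L|w|$, which has the wrong sign at a putative negative minimum of $w$ and so gives no contradiction there. The paper instead verifies the nonnegativity of the right-hand side directly, writing $f(t,u)=C(x,t)u$ with $C$ bounded (from $f(t,0)=0$ and the Lipschitz hypothesis) and absorbing this term into $C_1$ because $u$ is small on the whole tangent ball, which lies near $\partial\Omega$ where $u$ vanishes. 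Your argument needs either this absorption or an exponential-in-$t$ modification of the maximum principle; as written, the ``comparison principle'' you invoke is not among the tools the paper establishes. Both gaps are fixable along the paper's lines, but they are the substance of the proof rather than secondary points.
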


One of the  applications of this kind of \emph{Hopf's lemma} is in the process of moving planes. To obtain a priori estimates of the solutions on a boundary layer (a neighborhood of $\partial \Omega$), or to prove symmetry of solutions if $\Omega$ is symmetric, one effective tool is the method of moving planes. To ensure that the moving plane has a starting point, we need to show that the solution is monotone decreasing
in the outward normal direction near the boundary. To this end, we can exploit an immediate conclusion from the above theorem:
$$ \frac{\partial u(x_0,t)}{\partial \nu} = -\infty, \;\; x_0 \in \partial \Omega, \; \forall \, t \in (0, T).$$
It is expected that, through some regularity estimates, one would be able to obtain that
$$ \frac{\partial u(x,t)}{\partial \nu} <0, \;\; \mbox{ for all } x \mbox{ near }  x_0 \in \partial \Omega, \; \forall \, t \in (0, T).$$
The readers can find more details at the end of Section 2.

Our second main result is
the {\em asymptotic Hopf's lemma for antisymmetric functions} which is an important in carrying out the method of moving planes. Before stating it, let's first introduce some relevant background notation.

Choose  any direction to be the $x_1$ direction.   Let
$$T_{\lambda} =\{x \in \mathbb{R}^{n}|\; x_1=\lambda, \mbox{ for some given } \lambda\in \mathbb{R}\}$$
be the moving planes,
$$\tilde \Sigma_{\lambda} =\{x \in \mathbb{R}^{n} | \, x_1>\lambda\}$$
be the region to the right of the plane,
$$ x^{\lambda} =(2\lambda-x_1, x_2, ..., x_n)$$
be the reflection of $x$ about the plane $T_{\lambda}$, and
$$u_\lambda(x,t)= u(x^\lambda,t) ~\mbox{and}~ w_\lambda (x,t)=u_\lambda(x,t)-u(x,t).$$

To study the asymptotic symmetry and monotonicity of solutions, we consider
 the wellknown $\omega$-limit set of $u$
$$ \omega(u):=\{\varphi  \mid \varphi=\lim u(\cdot,t_k) ~\mbox{for~some}~t_k\rightarrow \infty\},$$
with the limit in $C_0(\mathbb R^n)$. Under the very mild assumptions that both $u$ and $f$ are bounded, from the regularity result of \cite{XFR}, one can derive that $\omega(u)$ is  a nonempty compact subset of  $C_0(\mathbb R^n)$ and
$$ \underset{t\rightarrow \infty}{\lim} dist_{C_0(\mathbb R^n)} (u(\cdot,t), \omega(u))=0 .$$

For each $\varphi (x) \in \omega(u)$,
denote $$ \psi_\lambda(x)=\varphi(x^\lambda)-\varphi(x)=\varphi_\lambda(x)-\varphi(x).$$
Obviously, it is the $\omega$-limit of $w_\lambda (x,t)$.

We establish
\begin{thm}\label{lem4w} (Asymptotic Hopf's lemma for antisymmetric functions)
Assume that $ w_\lambda (x,t) \in(C^{1,1}_{loc}( \tilde \Sigma_\la)\cap {\mathcal L}_{2s} )\times C^1((0,\infty))$ is bounded and satisfies
\begin{equation}\label{1ww2020c}
\left\{\aligned
&\frac{\partial w_\lambda }{\partial t}+(-\Delta)^sw_\lambda(x,t)=c_\lambda(x,t)w_\lambda(x,t),  &  (x,t) \in  \tilde \Sigma_{\lambda} \times (0,\infty),\\
&w_\lambda(x^\lambda,t)=-w_\lambda(x,t),  & (x,t) \in  \tilde \Sigma_{\lambda} \times (0,\infty),\\
&\underset{t \rightarrow \infty}{\underline{\lim}} w_\lambda(x,t)\geq 0 , &  x  \in  \tilde \Sigma_{\lambda} ,
\endaligned \right.
\end{equation}
where $c_\lambda(x,t) $ is bounded uniformly  for $t$.
If $\psi_\la$ is nonnegative and  $\psi_\lambda>0$ somewhere in $\tilde \Sigma_\la$. Then $$ \frac{\partial \psi_\lambda}{\partial \nu}(x)<0,~x\in \partial \tilde \Sigma_\lambda,$$ where $\nu$ is an outward normal vector on $\partial \tilde \Sigma_\la$.
\end{thm}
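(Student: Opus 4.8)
The plan is to transfer the asymptotic statement about $\psi_\lambda$ to a genuine pointwise Hopf estimate by passing to the $\omega$-limit, and then to invoke the already-established (non-parabolic) comparison machinery on the limiting antisymmetric function. First I would fix $x_0 \in \partial\tilde\Sigma_\lambda = T_\lambda$ and a small ball $B_r(\bar x) \subset \tilde\Sigma_\lambda$ tangent to $T_\lambda$ at $x_0$. The key preliminary step is to show that $\psi_\lambda$ itself satisfies, in the viscosity/pointwise sense on $\tilde\Sigma_\lambda$, an inequality of the form $(-\Delta)^s \psi_\lambda \ge \tilde c_\lambda(x)\psi_\lambda$ with $\tilde c_\lambda$ bounded, together with the antisymmetry $\psi_\lambda(x^\lambda) = -\psi_\lambda(x)$. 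This follows because $\psi_\lambda = \lim_k w_\lambda(\cdot,t_k)$ in $C_0(\mathbb R^n)$ along a sequence $t_k \to \infty$ realizing the $\omega$-limit: the time-derivative term $\partial_t w_\lambda$ contributes in the limit (after a standard translation/compactness argument on the time-shifted equations, using the boundedness of $w_\lambda$, $c_\lambda$, and the parabolic regularity of \cite{XFR}) a nonpositive defect, while $c_\lambda(\cdot,t_k) w_\lambda(\cdot,t_k) \to \tilde c_\lambda \psi_\lambda$ up to a further subsequence; the third line of \eqref{1ww2020c} guarantees $\psi_\lambda \ge 0$ so no sign is lost. The antisymmetry passes to the limit trivially.

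Next, with $\psi_\lambda \ge 0$ on $\tilde\Sigma_\lambda$, $\psi_\lambda > 0$ somewhere, and $\psi_\lambda$ antisymmetric, I would run the classical sub-solution/barrier argument for the fractional Laplacian — exactly the strategy behind Theorem~\ref{thmo2}, but now in the half-space with an antisymmetric weight. Concretely, build an explicit antisymmetric barrier $\phi$ supported near the tangent ball (e.g.\ a truncated power of the distance to $T_\lambda$, antisymmetrized across $T_\lambda$) for which one can compute $(-\Delta)^s\phi \le C$ near $x_0$, and note that the nonlocal integral picks up a strictly negative contribution from the reflected region where $\psi_\lambda < 0$. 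Since $\psi_\lambda > 0$ somewhere in $\tilde\Sigma_\lambda$ and $\psi_\lambda \ge 0$ throughout, antisymmetry forces $\int_{\tilde\Sigma_\lambda}\big(\psi_\lambda(y) - \psi_\lambda(y^\lambda)\big)K(x,y)\,dy = 2\int_{\tilde\Sigma_\lambda}\psi_\lambda(y)K(x,y)\,dy > 0$ for $x$ in the ball, with a quantitative lower bound on compact subsets. Comparing $\psi_\lambda$ with $\varepsilon\phi$ on $B_r(\bar x)$ and using the maximum principle for $(-\Delta)^s - \tilde c_\lambda$ on that ball (valid since we may shrink $r$ to make the zeroth-order term harmless), I obtain $\psi_\lambda(x) \ge \varepsilon\, d^{\,s}(x)$ for $x$ near $x_0$ inside the ball, where $d(x) = \mathrm{dist}(x,T_\lambda)$ — in particular $\psi_\lambda(x_0) = 0$, and the fractional normal derivative is bounded below in modulus. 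Dividing by $d^{\,s}$ and letting $x \to x_0$ along the inner normal then yields $\partial\psi_\lambda/\partial\nu(x_0) < 0$; repeating for every $x_0 \in \partial\tilde\Sigma_\lambda$ finishes the proof.

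The main obstacle I anticipate is the first step: making rigorous the claim that the parabolic inequality for $w_\lambda$ descends to the stationary differential inequality $(-\Delta)^s\psi_\lambda \ge \tilde c_\lambda\psi_\lambda$ for the $\omega$-limit. One has to justify that along the time-shifted functions $w_\lambda(\cdot, t + t_k)$ there is enough compactness (in $C^{1,1}_{loc}\cap\mathcal L_{2s}$, uniformly) to pass $(-\Delta)^s$ through the limit, and that the contribution of $\partial_t w_\lambda$ does not produce a wrong-signed term — this is where the hypothesis $\underline{\lim}_{t\to\infty} w_\lambda(x,t) \ge 0$ and the boundedness of the data are essential, together with the regularity estimates of \cite{XFR} that were invoked to define $\omega(u)$. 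Once the limiting inequality is in hand, the barrier computation is routine and parallels the proof of Theorem~\ref{thmo2}; the antisymmetry only helps, by supplying the strictly positive nonlocal term that drives the strict sign of the normal derivative.
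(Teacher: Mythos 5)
There is a genuine gap at the very step you flag as the main obstacle, and it is not repairable along the lines you sketch. You propose to derive from the parabolic inequality a \emph{stationary} inequality $(-\Delta)^s\psi_\lambda\ge \tilde c_\lambda\psi_\lambda$ for the $\omega$-limit, on the grounds that ``the time-derivative term contributes a nonpositive defect.'' Writing the limiting equation as $(-\Delta)^s w_\infty(\cdot,1)=c_\infty w_\infty(\cdot,1)-\partial_t w_\infty(\cdot,1)$ with $w_\infty(\cdot,1)=\psi_\lambda$, your claim amounts to $\partial_t w_\infty(x,1)\le 0$. Nothing in the hypotheses gives this: the condition $\underline{\lim}_{t\to\infty}w_\lambda(x,t)\ge 0$ controls the \emph{sign of the limit}, not the sign of the time derivative along the sequence $t_k$, and $\omega$-limit points of a bounded parabolic orbit need not satisfy the associated elliptic inequality (the orbit may keep oscillating; this is precisely why one needs ``asymptotic'' versions of the moving-plane tools rather than a reduction to the elliptic case). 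Once this step fails, the elliptic barrier comparison you run on $B_r(\bar x)$ has no differential inequality to compare against, so the whole second half is unsupported.

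The paper avoids this by never leaving the parabolic setting. It time-shifts, $w_k(x,t)=w_\lambda(x,t+t_k-1)$, uses the regularity of \cite{XFR} to extract a limit $w_\infty(x,t)$ that still solves the \emph{parabolic} equation on $\tilde\Sigma_\lambda\times[0,2]$ with $w_\infty(\cdot,1)=\psi_\lambda$, and uses the liminf hypothesis only to conclude $w_\infty\ge 0$ on the whole time slab. It then multiplies by $e^{mt}$ to absorb the zeroth-order term, builds a space--time antisymmetric subsolution $\chi_{D\cup D_\lambda}\tilde w+\delta\eta(t)g(x)$ with $g(x)=x_1\zeta(x)$, and applies the parabolic maximum principle for antisymmetric functions from \cite{CWNH} on the cylinder $(B_{2\varepsilon}(0)\cap\tilde\Sigma_\lambda)\times[0,2]$. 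Evaluating at $t=1$ gives $\psi_\lambda(x)\ge e^{-m}\delta\,x_1$ near the plane, hence the Hopf conclusion. (A secondary remark: for antisymmetric functions across $T_\lambda$ the correct boundary rate is linear in $d(x)$, as the paper's barrier $x_1\zeta(x)$ produces, rather than the $d^s(x)$ rate you import from the one-sided Dirichlet setting of Theorem~\ref{thmo2}; your ``antisymmetrized truncated power'' barrier would need a separate computation. But the decisive issue is the unjustified passage to the elliptic inequality.)
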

This \emph{asymptotic Hopf's lemma}  can be applied   to the second step in the asymptotic method of moving planes instead of the \emph{asymptotic narrow region principle} to directly obtain that the plane can move a little bit toward the right as we will illustrate at the end of Section 3.

This  paper is organized as follows. In Section 2 we present  proofs of the Hopf's lemma for parabolic fractional Laplacians and fractional $p$-Laplacians based on maximum principles and proper construction of sub-solutions. Due to the full nonlinearity of fractional $p$-Laplacians, the proof is quite different than that of fractional Laplacians.

Section 3 is devoted to an asymptotic Hopf's lemma for antisymmetric solutions to parabolic fractional equations.

We belief that both Hopf lemmas will become useful tools in the analysis of qualitative properties of solutions to parabolic fractional and fractional $p$ equations.

\section{  Hopf's lemmas for parabolic fractional and fractional $p$-equations}

In this section, we prove Theorem \ref{thmo2}. Since the different natures of fractional Laplacian and the fractional $p$-Laplacian--the former is linear while the latter is fully non-linear, the proofs are different. Hence we present them in two separate subsections.

\subsection{A Hopf's lemma for parabolic fractional Laplacian}
In order to prove first part of Theorem \ref{thmo2} when $p=2$, we first obtain
\begin{thm}\label{thmoo1} (A maximum principle for parabolic fractional Laplacian) Let $\Omega$ be a bounded domain in $\mathbb R^n$.
Assume that $u (x,t)\in (C^{1,1}_{loc}(\Omega)\cap {\cal L}_{2s} )\times  C^{1} (   [0,\infty))$  and is lower semi-continuous   about $x$ on  $\bar \Omega$.    If
\begin{equation}\label{j1000}
\left\{\begin{array}{ll}
\frac{\partial u}{\partial t}  (x,t)+(-\lap)^{s}u (x,t) \geq 0,  & (x,t)\in \Omega\times ( 0,\infty),\\
u(x,t) \geq 0, & (x,t)\in (\mathbb R^n\backslash  \Omega) \times [0,T]   ,\\
u(x,0)\geq 0,&  x \in \Omega,
\end{array}
\right.
\end{equation}
then
\be \label{520}
u(x,t) \geq 0  ~\mbox{in}~  \bar \Omega \times [0,T],~\forall ~T>0 .
\ee
Furthermore,  \eqref{520} holds for unbounded region $\Omega$ if we further assume that for all $t\in [0,T]$
\be \label{521}
\underset{|x| \rightarrow \infty}{\underline{\lim }} u(x,t)\geq 0.
\ee
Under the conclusion   \eqref{520},  if  $u $   vanishes somewhere at   $ (x_0,t_0)\in \Omega \times (0,T]$, then
\begin{equation}\label{eq:j6}
u(x,t_0) = 0,~ \mbox{almost~ everywhere for}~ x \in \mathbb R^n.
\end{equation}
\end{thm}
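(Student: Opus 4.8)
The plan is to prove the maximum principle in three stages, mirroring the structure of the statement. First, I would establish \eqref{520} for bounded $\Omega$ by a contradiction argument. Suppose $u$ is negative somewhere in $\bar\Omega \times [0,T]$. Since $u$ is lower semi-continuous in $x$ on the compact set $\bar\Omega$ and $C^1$ in $t$ on $[0,T]$, the infimum $m := \inf_{\bar\Omega \times [0,T]} u < 0$ is attained at some point $(\bar x, \bar t)$. The boundary condition $u \geq 0$ on $\Omega^c \times [0,T]$ and the initial condition $u(x,0) \geq 0$ force $\bar x \in \Omega$ and $\bar t > 0$, so $(\bar x, \bar t)$ is an interior spatial point at a positive time. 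At this minimum point, $\frac{\partial u}{\partial t}(\bar x, \bar t) \leq 0$ (it would be $=0$ if $\bar t < T$, and $\le 0$ if $\bar t = T$), while
$$(-\lap)^s u(\bar x, \bar t) = C_{n,s}\, P.V.\!\int_{\R^n} \frac{u(\bar x,\bar t) - u(y,\bar t)}{|\bar x - y|^{n+2s}}\,dy < 0,$$
because $u(\bar x,\bar t) - u(y,\bar t) \leq 0$ for all $y$ and is strictly negative on a set of positive measure (namely wherever $u(y,\bar t) > m$, which includes all of $\Omega^c$). Hence $\frac{\partial u}{\partial t}(\bar x,\bar t) + (-\lap)^s u(\bar x,\bar t) < 0$, contradicting the differential inequality in \eqref{j1000}. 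This yields \eqref{520}.

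For the unbounded case, the obstacle is that the infimum need not be attained. Under the extra hypothesis \eqref{521}, I would handle this by a standard exhaustion/approximation: either fix $\varepsilon > 0$ and use \eqref{521} to find $R$ large so that $u > -\varepsilon$ outside $B_R$, then run the argument above on $\Omega \cap B_R$ (with a minimum point now possibly on the new artificial boundary $\partial B_R$, where $u > -\varepsilon$, so the minimum over the truncated region is $> -\varepsilon$ unless it is interior, in which case the same contradiction applies), and let $\varepsilon \to 0$; or equivalently add a small perturbation that tends to $+\infty$ at spatial infinity and absorbs the error. Care is needed because the nonlocal term $(-\lap)^s u(\bar x,\bar t)$ must still be controlled — but since $\bar x$ is a global spatial minimum of $u(\cdot,\bar t)$ over all of $\R^n$ in the limiting picture, the sign of the integral is unaffected.

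Finally, for the strong maximum principle \eqref{eq:j6}: assuming \eqref{520} holds, suppose $u(x_0,t_0) = 0$ at some $(x_0,t_0) \in \Omega \times (0,T]$. Then $x_0$ is a spatial minimum of $u(\cdot,t_0)$ (since $u \ge 0$ everywhere), so again $\frac{\partial u}{\partial t}(x_0,t_0) \leq 0$. Moreover
$$(-\lap)^s u(x_0,t_0) = C_{n,s}\,P.V.\!\int_{\R^n} \frac{0 - u(y,t_0)}{|x_0 - y|^{n+2s}}\,dy = -C_{n,s}\int_{\R^n}\frac{u(y,t_0)}{|x_0-y|^{n+2s}}\,dy \leq 0,$$
with equality only if $u(\cdot,t_0) \equiv 0$ a.e. Adding the inequalities, $0 \le \frac{\partial u}{\partial t}(x_0,t_0) + (-\lap)^s u(x_0,t_0) \le 0$, so both terms vanish; in particular the integral is zero, which forces $u(y,t_0) = 0$ for a.e.\ $y \in \R^n$. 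The main obstacle throughout is making the nonlocal manipulations rigorous at the extremal point — justifying that the principal value integral genuinely has the claimed strict sign (which needs $u \not\equiv m$ on a positive-measure set, guaranteed here by the exterior condition) and that the time derivative sign at $t_0 = T$ is the correct one-sided inequality — rather than any deep new idea.
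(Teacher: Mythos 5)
Your proposal is correct and follows essentially the same route as the paper: locate a negative interior minimum $(\bar x,\bar t)$ using lower semi-continuity, use $\frac{\partial u}{\partial t}(\bar x,\bar t)\le 0$ together with the strict negativity of $(-\lap)^s u(\bar x,\bar t)$ forced by $u\ge 0$ on $\Omega^c$ to contradict the differential inequality, invoke \eqref{521} so the negative minimum is still attained in the unbounded case, and derive \eqref{eq:j6} by squeezing $0\le \frac{\partial u}{\partial t}+(-\lap)^s u\le 0$ at a zero of $u$. Your remark that the strict sign of the principal-value integral comes from $u>m$ on the positive-measure set $\Omega^c$ is exactly the point the paper makes via its $\Omega$/$\Omega^c$ splitting, so no further comparison is needed.
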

\begin{proof}
 Fix an arbitrary $T\in (0,\infty)$.  If \eqref{520} does not hold, then the lower semi-continuity of $u$ on $\bar \Omega$  implies that there exists an $ \bar x \in   \Omega$ and $\bar t \in (0,T]$ such that
$$ u (\bar x,\bar t):= \underset{  \Omega \times (0,T]}{\min} u(x,t)<0.$$
It follows that $  \frac{\partial u}{\partial t}(\bar x,\bar t) \leq  0$.
Furthermore, by a direct calculation,
$$\aligned
(-\lap)^s u(\bar x,\bar t)&= C_{n,s} P.V. \int_{\mathbb R^n} \frac{u(\bar x,\bar t)-u(y,\bar t)}{|\bar x-y|^{n+2s}}dy \\
&=C_{n,s} P.V. \int_{\Omega} \frac{u(\bar x,\bar t)-u(y,\bar t)}{|\bar x-y|^{n+2s}}dy+C_{n,s}\int_{ \Omega^c} \frac{u(\bar x,\bar t)-u(y,\bar t)}{|\bar x-y|^{n+2s}}dy \\
&\leq C_{n,s}\int_{\Omega^c} \frac{u(\bar x,\bar t)-u(y,\bar t)}{|\bar x-y|^{n+2s}}dy \\
&<0,
\endaligned $$
the second inequality from the bottom holds because
$$u(y,\bar t) \geq 0,~y\in  \Omega ^c,~\bar t \in [0,T]$$
 by \eqref{j1000} and $u (\bar x,\bar t)< 0$.
Hence $$\frac{\partial u}{\partial t}(\bar x,\bar t) +(-\lap)^s u(\bar x, \bar t)<0, $$
which contradicts the first inequality in \eqref{j1000}.  Hence \eqref{520} must be valid.

If $\Omega$ is unbounded, then  \eqref{521} guarantees that the negative minima of $u(x,t)$ in $\Omega\times (0,T]$ must be attained at some points. Then one can follow the same discussion as the proof of \eqref{520}  to arrive at a contradiction.

 Next we prove \eqref{521} based on \eqref{520}.    Suppose that there exists $(x_0, t_0 )\in \Omega \times (0,T]$ such that
$$u(x_0, t_0 )=0.$$
It is obvious that $(x_0,t_0)$ is the minimum point of $u(x,t)$. Hence,
 $ \frac{\partial u}{\partial t}( x_0 ,t_0)\leq0$.
 Then from
$$0\geq \frac{\partial u}{\partial t}( x_0 ,t_0)=-(-\lap)^s u(x_0,t_0)=C_{n,s} \int_{\mathbb R^n}\frac{u(y,t_0)}{|x_0-y|^{n+2s}}dy$$
and $u(y,t) \geq 0$ in $\mathbb R^n \times [0,T]$, we obtain
$$u(x,t_0)=0~ \mbox{almost~everywhere~in}~\mathbb R^n.$$
\end{proof}

Next we will construct a subsolution to prove the Hopf's lemma for parabolic fractional Laplacian. For reader's convenience, we restate first part of  Theorem \ref{thmo2} as follows.
\begin{thm}  (A Hopf's lemma for parabolic fractional Laplacian)
Let $u (x,t)\in(C^{1,1}_{loc}(\Omega)\cap {\cal L}_{2s})\times C^1((0,T])$ be a positive solution to
\begin{equation}\label{1eq:o1}
\left\{\begin{array}{ll}
 \frac{\partial u}{\partial t}+(-\lap)^su(x,t)=f(t,u(x,t)), & (x,t)\in \Omega\times (0,T], \\
 u(x,t)=0, &  (x,t)\in \Omega^c\times (0,T],
\end{array}
\right.\end{equation}
where $0<s<1$ and $\Omega$ is a bounded domain in $\mathbb R^n$ with smooth boundary.
Assume that
\be \label{101}
  f(t,0)=0,~t\in (0,T),~  f ~\mbox{is Lipschitz continuous in}~   ~u ~\mbox{uniformly for}~t.
\ee
Suppose that at a   point $x_0 \in   \partial \Omega$, tangent to which  a sphere in $\Omega$ can be constructed.  Then there exists a positive constant $c_0$, such that  for any $t_0\in (0,T)$   and  for all $x$ near  the boundary of $\Omega$, we have
$$ u(x,t_0)\geq c_0 d^s(x),$$
where $d(x)=dist(x,\partial \Omega) $. It follows that
$$\frac{\partial u(x,t_0)}{\partial \nu^s}<0,~\forall~x\in \partial \Omega,~\forall~t_0\in (0,T),$$
where $\nu$ is the outward normal of $\partial \Omega$ at $x_0$.
\end{thm}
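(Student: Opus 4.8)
The plan is to prove the quantitative lower bound $u(x,t_0)\ge c_0 d^s(x)$ near $\partial\Omega$ by constructing a subsolution that has the right boundary behavior, and then to invoke the maximum principle (Theorem~\ref{thmoo1}) to compare it with $u$; the statement about $\partial u/\partial\nu^s$ then follows immediately from the definition of the fractional normal derivative. First I would fix $x_0\in\partial\Omega$ and, using the interior sphere condition, place a ball $B_R(z)\subset\Omega$ tangent to $\partial\Omega$ at $x_0$. By positivity and continuity of $u$, on a slightly smaller closed ball $\overline{B_{R/2}(z)}$ we have $u(x,t)\ge m>0$ for all $t$ in a compact time interval around $t_0$. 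The goal is then to build a barrier on the annular region $A=B_R(z)\setminus \overline{B_{R/2}(z)}$.

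The natural candidate for the subsolution is (a time-independent multiple of) the torsion-type function for the fractional Laplacian on a ball, i.e.\ essentially $\psi(x)=(R^2-|x-z|^2)_+^s$, or rather a cutoff/modified version supported appropriately; one key fact to exploit is that $(-\Delta)^s\big((R^2-|x-z|^2)_+^s\big)$ is a positive constant inside $B_R(z)$, so that $\psi$ behaves like a solution of a clean equation. I would set $\underline{u}(x)=\varepsilon\,\psi(x)$ on $A$ (and arrange the exterior values to be $\le u$, using $u\ge m$ on $B_{R/2}(z)$ and $u\ge 0$ outside $B_R$), and check that for $\varepsilon$ small,
\[
\frac{\partial \underline u}{\partial t}+(-\Delta)^s\underline u - f(t,\underline u)\le 0 \quad\text{in }A\times(\text{time interval}),
\]
while $u$ satisfies the reverse inequality with equality; here the Lipschitz bound \eqref{101} (so $f(t,u)\ge -L u$) is what lets us absorb the $f$ term for small $\varepsilon$. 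Applying Theorem~\ref{thmoo1} to $w=u-\underline u$ on $A$ (noting $w\ge0$ on the parabolic-type boundary data, i.e.\ on $A^c$ and at the initial time slice of the interval) gives $u\ge\varepsilon\psi$ on $A$, and since $\psi(x)\sim c\,d(x)^s$ as $x\to x_0$, this yields $u(x,t_0)\ge c_0 d^s(x)$ near $x_0$. A compactness/covering argument over $\partial\Omega$ upgrades the local bound to a uniform $c_0$.

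The main obstacle I anticipate is the time dependence: unlike the elliptic case, one must control $u$ from below uniformly on $B_{R/2}(z)$ over an interval of times straddling $t_0$, and the subsolution comparison must be run on a time interval $[t_0-\delta,t_0]$ with the correct ordering at the left endpoint — this is where positivity of $u$ at earlier times, together with the strong maximum principle part \eqref{eq:j6} of Theorem~\ref{thmoo1} (to rule out $u$ vanishing somewhere interior and hence to get the needed strictly positive lower bound on a fixed sub-ball), becomes essential. A secondary technical point is that the barrier $\psi$ is only $C^{1,1}_{loc}$ in the interior and just $C^s$ up to $\partial B_R$, so one should verify it lies in the class ${\cal L}_{2s}$ and that $(-\Delta)^s\underline u$ is computed correctly (in particular that the nonlocal contribution from where $\underline u$ is set below $u$ only helps the subsolution inequality). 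Finally, once $u(x,t_0)\ge c_0 d^s(x)$ is established and $u(x_0,t_0)=0$, the quotient $\big(u(x_0,t_0)-u(x,t_0)\big)/|x-x_0|^s \le -c_0 d^s(x)/|x-x_0|^s \to -c_0<0$ (or $-\infty$) along the inward normal, which is exactly $\partial u/\partial\nu^s(x_0,t_0)<0$.
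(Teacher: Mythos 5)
Your overall strategy --- a barrier built from the function $(R^2-|x-z|^2)_+^s$ (whose fractional Laplacian is a positive constant in the ball), a positive lower bound for $u$ on an interior set supplying a strictly negative nonlocal contribution, and a comparison via Theorem \ref{thmoo1} --- is the same as the paper's, and your final passage from $u(x,t_0)\ge c_0 d^s(x)$ to $\partial u/\partial\nu^s<0$ is correct. The genuine gap is in the absorption step on your annulus $A=B_R(z)\setminus\overline{B_{R/2}(z)}$. Near the inner sphere $\partial B_{R/2}(z)$ the solution $u$ is bounded \emph{away} from zero, so $f(t,u)$ is a quantity of order one that does not shrink as $\varepsilon\to 0$; the only negative term available to absorb it is the nonlocal contribution of the mass you place on $B_{R/2}(z)$, which is a fixed constant of size roughly $c(n,s,R)\,m$ with $m=\min_{\overline{B_{R/2}}}u$, and there is no reason this dominates $L\sup_{A}u$. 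If instead you subtract the two equations, you get $\partial_t w+(-\Delta)^s w\ge f(t,u)-f(t,\underline u)=c(x,t)w$ with $c$ merely bounded, and Theorem \ref{thmoo1} as stated carries no zeroth-order term, so it cannot be invoked directly either. The paper avoids this by \emph{decoupling} the two sets: the comparison region is a small ball $B_\delta(\bar x)$ tangent to $\partial\Omega$ at $x_0$, lying entirely in a thin boundary layer where $u$ is uniformly small (so $|f(t,u)|\le L\sup_{B_\delta}u$ is less than half the nonlocal gain $C_1$ coming from a fixed compact set $D\subset\subset\Omega$ at positive distance from that ball). Your version couples the mass set and the comparison set inside one tangent ball of fixed radius, which is exactly where the inequality fails to close. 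A secondary point: your time-independent barrier $\varepsilon\psi$ does not obviously lie below $u$ on the initial time slice near the tangency point, where both vanish; the paper's temporal cutoff $\eta(t)$, vanishing at the endpoints of the time interval, is what makes the initial comparison trivial.

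To repair the argument you could either adopt the paper's geometry, or first extend Theorem \ref{thmoo1} to admit a bounded zeroth-order coefficient (e.g.\ by applying its proof to $e^{-\lambda t}w$ with $\lambda>\sup|c|$); with such an extension, your verification that $\underline u$ is a subsolution of the semilinear equation for small $\varepsilon$ does go through on the annulus, since $f(t,\underline u)\ge -L\varepsilon\psi$ \emph{is} small. Also note that the strong maximum principle \eqref{eq:j6} is not needed to obtain $u\ge m>0$ on $\overline{B_{R/2}(z)}$ over a compact time interval: positivity and continuity of $u$ on that compact set already give it, which is all the paper uses.
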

\begin{proof}  In the following Figure 1, consider the cylinder
\medskip

\begin{tikzpicture}
\draw (-2,4) -- (-2,0) arc (180:360:2cm and 0.5cm) -- (2,4) ++ (-2,0) circle (2cm and 0.5cm);
\draw[densely dashed] (-2,0) arc (180:0:2cm and 0.5cm);
\draw (0.4,3) -- (0.4,1) arc (180:360:0.8cm and 0.2cm) -- (2,3) ++ (-0.8,0) circle (0.8cm and 0.2cm);
\draw[densely dashed] (0.4,1) arc (180:0:0.8cm and 0.2cm);
\fill [orange]  (1.2,1) circle (0.8cm and 0.2cm);
\draw (1.2,2) circle (0.8cm and 0.2cm);
\draw (-1.5,3) -- (-1.5,1) arc (180:360:0.5cm and 0.1cm) -- (-0.5,3) ++ (-0.5,0) circle (0.5cm and 0.1cm);
\draw[densely dashed] (-1.5,1) arc (180:0:0.5cm and 0.1cm);
\path (2,0) [very thick,fill=black]  circle(1pt) node at (2.2,0) {$0$};
\path (2,1) [very thick,fill=black]  circle(1pt) node at (2.6,1) {$t_0-1$};
\path (2,2) [very thick,fill=black]  circle(1pt) node at (2.2,2) {$t_0$};
\path (2,3) [very thick,fill=black]  circle(1pt) node at (2.6,3) {$t_0+1$};
\path (2,4) [very thick,fill=black]  circle(1pt) node at (2.2,4) {$T$};
\node (test) at (3.5,2) {$P(x_0,t_0)$};
\draw [->,thin] (2,2) to [in = 90, out = 30] (3.5,2.2) (test);
\path (1,2.4) node at (1,2.4) {$B$};
 \path (4.8,4.3)  node [ font=\fontsize{10}{10}\selectfont] at (4.8,4.3)  {$B=B_1(0)\times [t_0-1,t_0+1]$};
\path (0,4.8) node at (0,4.8) {$E=\Omega\times (0,T]$};
\path (-1,1.5) node at (-1,1.5) {$\hat B$};
\node [below=1cm, align=flush center,text width=8cm] at (0,0.5)
        {$Figure$ 1 };
\end{tikzpicture}

Let $\partial B_\delta(\bar x)$ be a sphere in $\Omega$ that is tangent to $x_0$.  Use this, we construct a small circular cylinder
 $$B:=B_\delta(\bar x)\times [t_0-\epsilon,t_0+\epsilon] \subset \Omega \times (0,T),~\bar x \in \Omega,~t_0\in(0,T),$$
  where $\delta$ and $\epsilon$ are small positive constants. By a translation and a rescaling, for simplicity of notation, we may assume that   $B:=B_1(0)\times [t_0-1,t_0+1]$.

Let $ \hat B=D  \times [t_0-1,t_0+1]$ be a compact subset of $E$ which has a positive distance from $B$.  Since $u$ is positive and continuous, we have
 \be \label{cw2}
 u(x,t)\geq c_1>0,~(x,t)\in \hat B,
 \ee
for some constant $c_1$.

 To obtain a lower bound of $u$ in $B$, we construct a subsolution.  Set
 $$\underline{u}(x,t)=\chi _D (x)u(x,t)+\varepsilon (1-|x|^2)^s_+\eta(t),$$
  where $\varepsilon$ is a positive constant to be determined later,
 \be\label{cw9}
 \chi_D(x)=\left\{\begin{array}{ll}
1,& \quad  x \in  D,\\
 0,&\quad  x \not \in D^c ,
\end{array}
\right.\ee
 and  $\eta(t)\in C^{\infty}_0([0,T])$ satisfies
\begin{equation}\label{eq:o70}\eta(t)=\left\{\begin{array}{ll}
1,& \quad t\in [t_0-\frac{1}{2} , t_0+\frac{1}{2}],\\
 0,&\quad t\not\in (t_0-1,t_0+1).
\end{array}
\right.\end{equation}

By the definition of fractional Laplacian and \eqref{cw2}, we derive that for each fixed $t \in [t_0-1,t_0+1]$ and for any $x\in  B_1(0)$
\begin{equation}\label{eqo6}\aligned(-\lap)^s [\chi _D (x)u(x,t)]&=C_{n,s}P.V.\int_{\R^n}\frac{0-\chi _D (y)u(y,t)}{|x-y|^{n+2s}}dy\\
&=C\int_{D}\frac{-u(y,t)}{|x-y|^{n+2s}} dy\\
& \leq -C_1,~
\endaligned\end{equation}
where $C_1$ is a positive constant.

By \eqref{eqo6},    we obtain that for $(x,t)\in B_1(0)\times [t_0-1,t_0+1]$
 $$\aligned &\frac{\partial \underline{u}}{\partial t}+(-\lap)^s \underline{u}(x,t)\\
 &=\varepsilon \eta'(t)(1-|x|^2)^s_++(-\lap)^s [\chi _D (x)u(x,t)] +\varepsilon (-\lap)^s [(1-|x|^2)^s_+]\eta(t)\\
  &\leq -C_1+\varepsilon [\eta'(t)(1-|x|^2)^s+a \eta(t)],
  \endaligned $$
  where     the last inequality is due to \cite{G}
    $$ (-\lap)^s (1-|x|^2)^s_+ = a,~x\in B_1(0) ,~a
    ~\mbox{is a positive constant}.$$
 Denote
 $$w(x,t)=u(x,t)-\underline{u}(x,t).$$
Then $w(x,t) $ satisfies
 $$\aligned \frac{\partial w}{\partial t} + (-\lap)^s w(x,t)&\geq f(t,u)+C_1-\varepsilon (\eta'(t)(1-|x|^2)^s+a\eta(t))\\
 &   = C(x,t) u(x,t) +C_1-\varepsilon (\eta'(t)(1-|x|^2)^s+a\eta(t)),
  \endaligned$$
  where   $C(x,t)=\frac{f (t,u)-f(t,0)}{u(x,t)-0} =\frac{f (t,u)}{u(x,t)}$ is bounded by  \eqref{101}.  From $$u(x,t)=0,~(x,t)\in \Omega^c \times(0,T)$$
   and the continuity of  $u(x,t)$, one knows that $u(x,t)$ is small when $x\in \Omega$ is sufficiently close to $\partial\Omega$ and $t\in [t_0-\epsilon,t_0+\epsilon]$.
  Hence   taking $\varepsilon$ sufficiently small, we have
  $$
  \frac{\partial w}{\partial t}+(-\lap)^s w(x,t)
  \geq 0,~(x,t)\in B_1(0)\times [t_0-1,t_0+1].
 $$
 By the definition of $\underline{u}(x,t)$, we have
$$w(x,t)\geq 0, ~~(x,t) \in    B_1^c (0)\times [t_0-1,t_0+1]$$
and
$$
w(x,t_0-1)\geq 0,\ \  x \in   B_1 (0).
$$
Now, by applying
 Theorem \ref{thmoo1}, we derive
$$
w(x,t)\geq 0, ~(x,t)\in B_1(0)\times [t_0-1,t_0+1].
$$

Therefore
 $$ u(x,t) \geq   \underline{u}(x,t)=\varepsilon (1-|x|^2)^s_+ \eta (t),~\mbox{in}~B_1(0)\times [t_0-1,t_0+1].$$
In particular, fixed $\varepsilon$, by \eqref{eq:o70}, for $x \in B_1(0) $ one   has
\begin{equation}\label{eq:o10}
u(x,t_0)\geq \varepsilon (1-|x|^2)^s_+=\varepsilon ((1-|x|)^s(1+|x|))^s_+=c_0 d^s(x),
\end{equation}
where $d(x)=dist(x,\partial \Omega) $ and $c_0$ is a positive constant. Consequently,
$$\underset{x\rightarrow \partial \Omega}{\lim}\frac{  u(x,t_0)}{d^s(x)}\geq c_0>0,~\forall~t_0\in (0,T).$$

It follows that if $\nu$ is the outward normal of $\partial \Omega$ at $x_0$, then
 $$\frac{\partial u(x,t_0)}{\partial \nu^s}<0,~\forall~x\in \partial \Omega,~\forall~t_0\in (0,T),$$
since $u(x_0, t_0)=0$.
\end{proof}

\subsection{A Hopf's lemma for parabolic fractional $p$-Laplacian}
We first prove
\begin{thm}\label{thmoo2} (A maximum principle for parabolic fractional $p$-Laplacian) Let $p>2,~\Omega$ be a bounded domain in $\mathbb R^n$.
Assume that
 $$u (x,t), \underline{u}(x,t) \in (C^{1,1}_{loc}(\Omega)\cap {\cal L}_{sp} )\times  C^{1} (   [0,\infty))$$
  and $w(x,t)=u(x,t)-\underline{u}(x,t)$ is lower semi-continuous   about $x$ on  $\bar \Omega$.    If
\begin{equation}\label{1j1000}
\left\{\begin{array}{ll}
\frac{\partial w}{\partial t}  (x,t)+(-\lap)_p^{s}u (x,t) -(-\lap)_p^{s}\underline{u} (x,t) \geq 0,    ~ (x,t)\in \Omega\times ( 0,\infty),\\
w(x,t) \geq 0,  ~ \hspace{4.2cm}(x,t)\in (\mathbb R^n \backslash \Omega )\times [0,T]     ,\\
w(x,0)\geq 0,   ~ \hspace{6.8cm} x\in \Omega,
\end{array}
\right.
\end{equation}
then
\be \label{5201}
w(x,t) \geq 0  ~\mbox{in}~  \bar \Omega \times [0,T],~\forall ~T>0 .
\ee
  Furthermore,    \eqref{5201} hold for unbounded region $\Omega$ if we further assume that for all $t\in [0,T]$
\be \label{52110}
\underset{|x| \rightarrow \infty}{\underline{\lim }} u(x,t)\geq 0.
\ee
  Under the conclusion \eqref{5201}, if $w=0$ at some point $(x_0,t_0)\in \Omega \times (0,T]$, then
\be \label{5211}
w(x,t_0)=0,~\mbox{almost~everywhere~in}~\mathbb R^n.
\ee
\end{thm}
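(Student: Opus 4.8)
The plan is to follow the same contradiction scheme as in the proof of Theorem \ref{thmoo1}, but to replace the linearity of $(-\lap)^s$ by the monotonicity of the map $a \mapsto |a|^{p-2}a$. Suppose \eqref{5201} fails. Since $w$ is lower semi-continuous in $x$ on $\bar\Omega$ and, by \eqref{1j1000}, $w \geq 0$ on $\Omega^c \times [0,T]$ and at $t=0$, any negative infimum of $w$ over $\bar\Omega\times[0,T]$ is attained at some interior point $(\bar x,\bar t)\in \Omega\times(0,T]$ with $w(\bar x,\bar t)<0$. As usual this forces $\frac{\partial w}{\partial t}(\bar x,\bar t)\leq 0$ (using $\bar t>0$), so to reach a contradiction with the first line of \eqref{1j1000} it suffices to show
$$ (-\lap)_p^s u(\bar x,\bar t) - (-\lap)_p^s \underline{u}(\bar x,\bar t) < 0. $$

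The key algebraic step is the following. Write $U = u(\bar x,\bar t)$, $\underline U = \underline u(\bar x,\bar t)$, so $U - \underline U = w(\bar x,\bar t) < 0$, and for $y\in\mathbb R^n$ write $V(y) = u(y,\bar t)$, $\underline V(y) = \underline u(y,\bar t)$, so $w(\bar x,\bar t) \leq w(y,\bar t)$, i.e. $U - \underline U \leq V(y) - \underline V(y)$, hence $U - V(y) \leq \underline U - \underline V(y)$ for every $y$. Then
$$ (-\lap)_p^s u(\bar x,\bar t) - (-\lap)_p^s \underline u(\bar x,\bar t) = C_{n,sp}\,\mathrm{P.V.}\!\int_{\mathbb R^n} \frac{\Phi\big(U - V(y)\big) - \Phi\big(\underline U - \underline V(y)\big)}{|\bar x - y|^{n+sp}}\,dy, $$
where $\Phi(a) = |a|^{p-2}a$ is strictly increasing. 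Since $U - V(y) \leq \underline U - \underline V(y)$ pointwise, every integrand is $\leq 0$, so the integral is $\leq 0$. To get strict negativity we localize: on $\Omega^c$ we have $w(y,\bar t)\geq 0 > w(\bar x,\bar t)$, so the inequality $U-V(y)\leq \underline U-\underline V(y)$ is strict there (indeed $w(y,\bar t) - w(\bar x,\bar t) > 0$ means $U - V(y) < \underline U - \underline V(y)$), and by strict monotonicity of $\Phi$ the integrand is strictly negative on the set $\Omega^c$, which has infinite measure; a standard argument (e.g. restricting to $\Omega^c \cap B_R$ and using that $\Phi$ is locally Lipschitz away from $0$, or just Fatou/positivity of the integral of a negative function on a set of positive measure) shows the whole integral is strictly negative. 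This yields $\frac{\partial w}{\partial t}(\bar x,\bar t) + (-\lap)_p^s u(\bar x,\bar t) - (-\lap)_p^s \underline u(\bar x,\bar t) < 0$, contradicting \eqref{1j1000}, and proves \eqref{5201}.

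For the unbounded case, assumption \eqref{52110} (together with the fact that $w\to$ something $\geq 0$, or rather that $u$ does) guarantees the negative infimum of $w$ is still attained at an interior point, and the identical argument applies. Finally, for \eqref{5211}: under \eqref{5201}, if $w(x_0,t_0)=0$ at an interior point, then $(x_0,t_0)$ is a minimum of $w$, so $\frac{\partial w}{\partial t}(x_0,t_0)\leq 0$, and from the equation
$$ 0 \geq \frac{\partial w}{\partial t}(x_0,t_0) \geq -\Big[(-\lap)_p^s u(x_0,t_0) - (-\lap)_p^s \underline u(x_0,t_0)\Big] = -C_{n,sp}\,\mathrm{P.V.}\!\int_{\mathbb R^n}\frac{\Phi\big(w(x_0,t_0)-0 + \cdots\big)-\cdots}{|x_0-y|^{n+sp}}\,dy; $$
more precisely, since $w(x_0,t_0)=0\leq w(y,t_0)$ we again have $u(x_0,t_0)-u(y,t_0)\leq \underline u(x_0,t_0)-\underline u(y,t_0)$ for all $y$, so the difference of $p$-Laplacians is $\leq 0$, hence $=0$, and therefore the nonpositive integrand must vanish a.e.; strict monotonicity of $\Phi$ then forces $w(y,t_0)=0$ for a.e. $y\in\mathbb R^n$.

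\medskip

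The main obstacle, compared with the linear case, is precisely the nonlinearity: one cannot split $(-\lap)_p^s u - (-\lap)_p^s \underline u$ additively into an $\Omega$-part plus an $\Omega^c$-part as was done for $(-\lap)^s$ in Theorem \ref{thmoo1}. Instead the whole argument must be run on the \emph{difference} of the two operators evaluated at the same point, exploiting that $a\mapsto |a|^{p-2}a$ is strictly monotone so that the pointwise ordering $u(\bar x,\bar t)-u(y,\bar t)\leq \underline u(\bar x,\bar t)-\underline u(y,\bar t)$ (which is exactly the statement that $(\bar x,\bar t)$ minimizes $w$) passes through $\Phi$ and then through the principal-value integral. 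Care is needed in handling the P.V. near $y=\bar x$ (the singular part contributes a well-defined finite quantity of the correct sign since both terms are treated symmetrically and $u,\underline u\in C^{1,1}_{loc}$) and in justifying that the integrand's strict negativity on $\Omega^c$ survives integration; everything else is routine.
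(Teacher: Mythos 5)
Your proposal is correct and follows essentially the same route as the paper: a contradiction at a negative minimum $(\bar x,\bar t)$, where the pointwise ordering $w(\bar x,\bar t)-w(y,\bar t)\le 0$ (strict for $y\in\Omega^c$) is pushed through the strictly increasing map $G(z)=|z|^{p-2}z$ to force $(-\lap)_p^s u(\bar x,\bar t)-(-\lap)_p^s\underline u(\bar x,\bar t)<0$, exactly as in the paper's decomposition into the contributions from $\Omega$ and $\Omega^c$. The only (harmless) divergence is in the final step \eqref{5211}, where the paper invokes the quantitative bound $G(z_2)-G(z_1)\ge C(z_2-z_1)^{p-1}$ of Lemma \ref{lemo5}, while you use only the injectivity of $z\mapsto|z|^{p-2}z$; both arguments work.
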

\begin{rem}
Here, the main difference between  parabolic fractional  $p$-Laplacian and parabolic fractional Laplacian is that the former is a non-linear operator, hence we need to use
$ (-\lap)_p^{s}u (x,t) -(-\lap)_p^{s}\underline{u} (x,t)$ instead of $(-\lap)_p^{s}w (x,t),$ and this makes the proof different.
\end{rem}
 \begin{proof} Fix an arbitrary $T\in (0,\infty)$.  If \eqref{5201} does not hold, then the lower semi-continuity of $w$ on $\bar \Omega$  implies that there exists an $ \bar x \in  \Omega$ and  a $\bar t \in (0,T]$ such that
$$ w (\bar x,\bar t):= \underset{  \Omega \times (0,T]}{\min} w(x,t)<0.$$
 Hence
$$  \frac{\partial w}{\partial t}(\bar x,\bar t) \leq 0.$$
Denote     $G(z)=|z|^{p-2}z$.  Obviously, $G(z)$ is strictly increasing, and $G'(z)=(p-1)|z|^{p-2}\geq 0$.
Through a direct calculation,
\begin{equation}\label{eq:o100}\aligned
&(-\lap)_p^s u(\bar x,\bar t)-(\lap)^s_p \underline{u}(\bar x, \bar t) \\
=& C_{n,sp} P.V. \int_{\mathbb R^n} \frac{ G(u(\bar x,\bar t)-u( y,\bar t))-G(\underline{u}(\bar x,\bar t)-\underline{u}( y,\bar t))}{|\bar x-y|^{n+sp}}dy \\
=&C_{n,sp} P.V. \int_{\Omega} \frac{ G(u(\bar x,\bar t)-u( y,\bar t))-G(\underline{u}(\bar x,\bar t)-\underline{u}( y,\bar t))}{|\bar x-y|^{n+sp}}dy \\
&+C_{n,sp}\int_{\mathbb R^n\backslash \Omega} \frac{ G(u(\bar x,\bar t)-u( y,\bar t))-G(\underline{u}(\bar x,\bar t)-\underline{u}( y,\bar t))}{|\bar x-y|^{n+sp}}dy \\
=&I_1+I_2.
\endaligned \end{equation}
In $I_1$, $y\in \Omega$, we derive
$$G(u(\bar x, \bar t)-u(y,\bar t))-G(\underline{u}(\bar x,\bar t)-\underline{u}(y, \bar t))\leq 0,$$
due to the monotonicity of $G$, $(\bar x, \bar t)$ is the minimum point of $w$ and the fact that
$$[u(\bar x, \bar t)-u(y,\bar t)]-[\underline{u}(\bar x,\bar t)-\underline{u}(y,\bar t)=w(\bar x,\bar t)-w(y, \bar t)\leq 0.$$
Hence
\begin{equation}\label{eq:o102}
I_1\leq 0.
\end{equation}
For $I_2$.
noticing that $w(y,\bar t)\geq 0,~y\in \Omega^c$ and $w(\bar x,\bar t)<0$, we obtain
$$u(\bar x,\bar t)-u(y,\bar t)-[\underline{u}(\bar x,\bar t)-\underline{u}(y,\bar t)]=w(\bar x,\bar t)-w(y,\bar t)<0,~y\in \Omega^c.$$
Then
\begin{equation}
I_2 < 0
\label{eq:o101}
\end{equation}
follows from the strict monotonicity of $G(\cdot)$.

Combining \eqref{eq:o100}, \eqref{eq:o102}, and  \eqref{eq:o101}, we arrive at
 $$(-\lap)^s_p u(\bar x,\bar t)-(-\lap)^s_p\underline{u}(\bar x,\bar t)<0.$$
Consequently $$\frac{\partial w}{\partial t}(\bar x,\bar t) +(-\lap)^s_p u(\bar x, \bar t)-(-\lap)^s_p \underline{u} (\bar x, \bar t)<0, $$
which contradicts the first inequality of \eqref{1j1000}.  Hence \eqref{5201}  holds.

If $\Omega$ is unbounded, \eqref{52110} guarantees that the negative minimum of $w(x,t)$  must be attained at some point, then one can follow the same discussion as in  the proof of \eqref{5201}  to arrive at a contradiction.

 Next we prove \eqref{5211} based on \eqref{5201}.    Suppose that there exists $(x_0, t_0 )\in \Omega \times (0,T]$ such that
$$w(x_0, t_0 )=0.$$
It is obvious that $(x_0,t_0)$ is the minimum point of $w(x,t)$. Hence,
 $ \frac{\partial w}{\partial t}( x_0 ,t_0)\leq0$.
 Then by \eqref{eq:o100}, \eqref{eq:o102} and \eqref{eq:o101},  one has
\be \label{555}\aligned
  \frac{\partial w}{\partial t}( x_0 ,t_0)+(-\lap)_p^s u(x_0,t_0)-(-\lap)_p^s \underline{u}(x_0,t_0) \leq C \int_{\mathbb{R}^n}\frac{-(w(y,t_0))^{p-1}}{|x_0-y|^{n+sp}}dy.
\endaligned \ee
Here the last inequality was derived from the following
\begin{lem}\label{lemo5}\cite{WC2}
For $G(z)=|z|^{p-2}z,~p>2$, there exists a constant $C>0$ such that
$$G(z_2)-G(z_1) \geq C(z_2-z_1)^{p-1},$$ for arbitrary $z_2\geq z_1$.
\end{lem}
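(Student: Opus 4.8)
The plan is to reduce the inequality $G(z_2)-G(z_1)\ge C(z_2-z_1)^{p-1}$ to two elementary one‑variable estimates for the map $t\mapsto t^{q}$ on $[0,\infty)$, where $q:=p-1\ge 1$, and then to split the argument into three cases according to the signs of $z_1$ and $z_2$. First I would record the two scalar facts used throughout: \emph{(i) superadditivity,} $(a+b)^{q}\ge a^{q}+b^{q}$ for all $a,b\ge 0$, which follows from $s^{q}\le s$ and $(1-s)^{q}\le 1-s$ for $s\in[0,1]$, $q\ge 1$; and \emph{(ii)} $(a+b)^{q}\le 2^{q-1}(a^{q}+b^{q})$ for all $a,b\ge 0$, which is Jensen's inequality for the convex function $t\mapsto t^{q}$ evaluated at the midpoint.

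Then: if $0\le z_1\le z_2$, then $G(z_2)-G(z_1)=z_2^{q}-z_1^{q}$, and (i) with $a=z_1$, $b=z_2-z_1$ gives $z_2^{q}\ge z_1^{q}+(z_2-z_1)^{q}$, hence $G(z_2)-G(z_1)\ge (z_2-z_1)^{q}$. The case $z_1\le z_2\le 0$ reduces to the previous one by setting $a=-z_1\ge b=-z_2\ge 0$ and noting $G(z_2)-G(z_1)=a^{q}-b^{q}$, $z_2-z_1=a-b$, so again $G(z_2)-G(z_1)\ge (z_2-z_1)^{q}$. Finally, in the sign‑crossing case $z_1<0<z_2$, put $a=z_2>0$ and $b=-z_1>0$; then $G(z_2)-G(z_1)=a^{q}+b^{q}$ while $z_2-z_1=a+b$, and (ii) yields $a^{q}+b^{q}\ge 2^{1-q}(a+b)^{q}=2^{2-p}(z_2-z_1)^{q}$. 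Since $2^{2-p}\le 1$ for $p\ge 2$, all three cases are covered by the single constant $C=2^{2-p}$.

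I do not anticipate a genuine obstacle: Lemma~\ref{lemo5} is a purely elementary inequality. The only slightly delicate point is the case in which $[z_1,z_2]$ straddles the origin, where superadditivity is no longer the right tool and one must instead pay the convexity constant $2^{2-p}$ coming from (ii); this is also why the constant necessarily degenerates as $p\to\infty$. With Lemma~\ref{lemo5} available, estimate~\eqref{555} is immediate, and the proof of Theorem~\ref{thmoo2} is then finished by combining \eqref{555} with $\tfrac{\partial w}{\partial t}(x_0,t_0)\le 0$ and the differential inequality in \eqref{1j1000}, which forces $\int_{\mathbb{R}^{n}}(w(y,t_0))^{p-1}|x_0-y|^{-n-sp}\,dy=0$, i.e. \eqref{5211}.
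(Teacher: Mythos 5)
Your proof is correct. Note that the paper itself offers no argument for this lemma at all --- it is quoted verbatim from the reference [WC2] --- so there is no in-paper proof to match; what you have written is a complete, self-contained substitute. Your three-case split is sound: for $0\le z_1\le z_2$ and for $z_1\le z_2\le 0$ the superadditivity $(a+b)^{q}\ge a^{q}+b^{q}$ (with $q=p-1\ge 1$) gives the inequality with constant $1$, and in the sign-crossing case $G(z_2)-G(z_1)=z_2^{q}+|z_1|^{q}\ge 2^{1-q}(z_2-z_1)^{q}$ by convexity, so $C=2^{2-p}$ works uniformly; both scalar facts (i) and (ii) are justified correctly. For comparison, the route usually taken in the cited literature is the integral form of the mean value theorem, $G(z_2)-G(z_1)=(p-1)\left(\int_0^1|z_1+t(z_2-z_1)|^{p-2}\,dt\right)(z_2-z_1)$, followed by the observation that $|z_1+t(z_2-z_1)|\ge\tfrac14(z_2-z_1)$ on a set of $t$ of measure at least $\tfrac12$, which yields the same conclusion with a different (and likewise $p$-dependent, degenerating as $p\to\infty$) constant. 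Your case analysis avoids the integral representation entirely and makes the constant explicit, which is arguably cleaner; the integral route has the advantage of generalizing directly to the vector-valued version of $G$. Either way, the subsequent use of the lemma in \eqref{555} and in the proof of Theorem \ref{thmoo2} goes through exactly as you describe.
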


Now combining  \eqref{1j1000},  \eqref{555} and
$w(y,t) \geq 0$ in $\mathbb R^n \times [0,T]$, we obtain
$$w(x,t_0)=0~ \mbox{almost~everywhere~in}~\mathbb R^n.$$
\end{proof}
For reader's convenience, we restate  Theorem \ref{thmo2} before its proof as follows.
\begin{thm} (A Hopf's lemma for parabolic fractional $p$-Laplacian)
Assume that $u(x,t)\in (C^{1,1}_{loc}(\Omega)\cap {\cal L}_{sp})\times C^1((0,T])$ is a positive solution to
\begin{equation}\label{1eq:o2}
\left\{\begin{array}{ll}
 \frac{\partial u}{\partial t}+(-\lap)_p^su(x,t)=f(t,u(x,t)),   &( x,t) \in \Omega\times (0,T],\\
 u(x,t)=0, & ( x,t) \in \Omega^c\times (0,T],
\end{array}
\right.\end{equation}
where $0<s<1,~p>2$ and $\Omega$ is a bounded domain in $\mathbb R^n$ with smooth boundary.
Assume that $f$ satisfies \eqref{01}.
 Suppose that at a   point $x_0 \in   \partial \Omega$, tangent to which  a sphere in $\Omega$ can be constructed.  Then there exists a positive constant $c_0$, such that  for any $t_0\in (0,T)$   and  for all $x$ near  the boundary of $\Omega$, we  have
$$ u(x,t_0)\geq c_0 d^s(x),$$
where $d(x)=dist(x,\partial \Omega) $. It follows that
 $$\frac{\partial u(x,t_0)}{\partial \nu^s}<0,~\forall~x\in \partial \Omega,~\forall~t_0\in (0,T),$$
 where $\nu$ is the outward normal of $\partial \Omega$ at $x_0$.
\end{thm}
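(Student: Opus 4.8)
The plan is to run the same barrier argument as in the case $p=2$, but with two modifications forced by the nonlinearity of $(-\lap)_p^s$: the role of the linear maximum principle is played by Theorem~\ref{thmoo2}, and, since $(-\lap)_p^s$ does not distribute over a sum, one must estimate the \emph{difference} $(-\lap)_p^s u-(-\lap)_p^s\underline{u}$ directly rather than compute $(-\lap)_p^s$ of the barrier term alone. Concretely, I would first fix the tangent ball $\partial B_\delta(\bar x)\subset\Omega$ at $x_0$, form the space--time cylinder $B=B_\delta(\bar x)\times[t_0-\epsilon,t_0+\epsilon]\subset\Omega\times(0,T)$, and after a translation and a parabolic rescaling normalize it to $B_1(0)\times[t_0-1,t_0+1]$; I also fix a compact set $\hat B=D\times[t_0-1,t_0+1]$ at positive distance from $B$ on which $u\ge c_1>0$ (by positivity and continuity). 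I then take the same subsolution
$$\underline{u}(x,t)=\chi_D(x)u(x,t)+\varepsilon(1-|x|^2)^s_+\eta(t),$$
with $\chi_D$ and $\eta$ the cutoffs in \eqref{cw9} and \eqref{eq:o70}, and $\varepsilon>0$ small, to be fixed.

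The next step is to verify the three hypotheses of Theorem~\ref{thmoo2} for the pair $(u,\underline{u})$ on $B_1(0)\times[t_0-1,t_0+1]$. The exterior inequality $w:=u-\underline{u}\ge0$ on $B_1^c(0)\times[t_0-1,t_0+1]$ is immediate, since there $(1-|x|^2)_+=0$, hence $\underline{u}=\chi_D u\le u$ (using $u\ge0$ in $\Omega$ and $u=0$ in $\Omega^c$); the initial inequality $w(\cdot,t_0-1)\ge0$ holds because $\eta(t_0-1)=0$. For the parabolic inequality note that on $B_1(0)$ one has $\chi_D\equiv0$, so $\underline{u}=\varepsilon v$ with $v(x,t)=(1-|x|^2)^s_+\eta(t)$, and using the equation $\partial_tu+(-\lap)_p^su=f(t,u)$ one finds
$$\frac{\partial w}{\partial t}+(-\lap)_p^su-(-\lap)_p^s\underline{u}=f(t,u)-\varepsilon(1-|x|^2)^s_+\eta'(t)-(-\lap)_p^s\underline{u}(x,t),$$
so everything reduces to an upper bound for $(-\lap)_p^s\underline{u}$ on $B_1(0)$.

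To obtain that bound I split $\int_{\mathbb R^n}=\int_D+\int_{\mathbb R^n\setminus D}$ in the definition of $(-\lap)_p^s\underline{u}(x,t)$, which is legitimate since for $x\in B_1(0)$ the singularity $y=x$ lies in $\mathbb R^n\setminus D$. Over $D$ we have $\underline{u}(y,t)=u(y,t)\ge c_1$ while $\underline{u}(x,t)=\varepsilon v(x,t)=O(\varepsilon)$, so for $\varepsilon$ small $G(\underline{u}(x,t)-\underline{u}(y,t))\le-(c_1/2)^{p-1}$ (with $G(z)=|z|^{p-2}z$ as in Theorem~\ref{thmoo2}), and, $|x-y|$ being bounded, this part contributes at most $-C_1<0$ with $C_1$ a fixed constant. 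Over $\mathbb R^n\setminus D$ we have $\underline{u}(y,t)=\varepsilon v(y,t)$, hence $G(\underline{u}(x,t)-\underline{u}(y,t))=\varepsilon^{p-1}G(v(x,t)-v(y,t))$, so this part equals $\varepsilon^{p-1}$ times a quantity differing from $(-\lap)_p^sv(x,t)$ only by a bounded contribution coming from the region $D$; using that $v(\cdot,t)=\eta(t)(1-|x|^2)^s_+$ has bounded fractional $p$-Laplacian on $B_1(0)$ — for $p=2$ this is the Getoor identity $(-\lap)^s(1-|x|^2)^s_+\equiv a$, for $p>2$ a boundedness estimate for this model function — and that $v$ is bounded, this part is at most $\varepsilon^{p-1}a$ for a constant $a$. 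Hence $(-\lap)_p^s\underline{u}(x,t)\le-C_1+\varepsilon^{p-1}a$ on $B_1(0)\times[t_0-1,t_0+1]$, and therefore
$$\frac{\partial w}{\partial t}+(-\lap)_p^su-(-\lap)_p^s\underline{u}\ \ge\ f(t,u)+C_1-\varepsilon\|\eta'\|_\infty-\varepsilon^{p-1}a .$$
Since $f(t,u)=C(x,t)u(x,t)$ with $C$ bounded by \eqref{01} and $u$ is small near $\partial\Omega$ (so that $|C(x,t)u(x,t)|<C_1/2$ on the relevant cylinder, ensured by taking the tangent ball thin), choosing $\varepsilon$ small makes the right-hand side nonnegative. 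Theorem~\ref{thmoo2} then yields $u\ge\underline{u}$ on $B_1(0)\times[t_0-1,t_0+1]$; at $t=t_0$, where $\eta\equiv1$, this gives $u(x,t_0)\ge\varepsilon(1-|x|^2)^s_+\ge c_0d^s(x)$ for $x$ near $\partial\Omega$, whence $\liminf_{x\to\partial\Omega}u(x,t_0)/d^s(x)\ge c_0>0$, and since $u(x_0,t_0)=0$ we conclude $\frac{\partial u}{\partial\nu^s}(x_0,t_0)<0$ (possibly $-\infty$) for every $t_0\in(0,T)$.

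I expect the main obstacle to be precisely the upper bound on $(-\lap)_p^s\underline{u}$: the nonlinearity prevents splitting $(-\lap)_p^s$ over the sum defining $\underline{u}$ (which is exactly why Theorem~\ref{thmoo2} is formulated with $(-\lap)_p^su-(-\lap)_p^s\underline{u}$), and one must in addition control $(-\lap)_p^s(1-|x|^2)^s_+$ on $B_1(0)$ — an explicit constant when $p=2$, but only a boundedness estimate when $p>2$. A secondary technical point is the bookkeeping in the parabolic rescaling of the cylinder and in comparing the fixed constant $C_1$ produced by the integral over $D$ with $\sup_B|C(x,t)u(x,t)|$, which forces the tangent ball $B_\delta(\bar x)$, and hence the boundary shell on which the estimate is proved, to be taken thin.
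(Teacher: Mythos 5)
Your proposal is correct and follows essentially the same route as the paper: the same subsolution $\chi_D(x)u+\varepsilon(1-|x|^2)^s_+\eta(t)$, the same reliance on the comparison-type maximum principle (Theorem~\ref{thmoo2}) for the pair $(u,\underline{u})$, and the same key external input that $(-\lap)^s_p(1-|x|^2)^s_+$ is bounded above in $B_1(0)$. The only (cosmetic) difference is that you split the integral as $\int_D+\int_{\mathbb R^n\setminus D}$ and bound the $D$-part by the monotonicity of $G$ directly, whereas the paper adds and subtracts $\int_D G(\varepsilon\psi(x,t))|x-y|^{-n-sp}\,dy$ to isolate $(-\lap)^s_p(\varepsilon\psi)$ exactly and then invokes the elementary inequality $G(z_2)-G(z_1)\ge C(z_2-z_1)^{p-1}$ of Lemma~\ref{lemo5}; both yield the same estimate $(-\lap)^s_p\underline{u}\le C\varepsilon^{p-1}\eta^{p-1}(t)-C_1$.
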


\begin{proof}
Let $\partial B_\delta(\bar x)$ be a sphere in $\Omega$ that is tangent to $x_0$.  Use this, we construct a small circular cylinder
 $$B:=B_\delta(\bar x)\times [t_0-\epsilon,t_0+\epsilon] \subset \Omega \times (0,T),~\bar x \in \Omega,~t_0\in(0,T),$$
  where $\delta$ and $\epsilon$ are small positive constants.
  By translation and rescaling, for simplicity, we may assume that   $B:=B_1(0)\times [t_0-1,t_0+1]$. Also  see Figure 1.

Let $ \hat B=D  \times [t_0-1,t_0+1]$ be a compact subset of $\Omega \times (0,T]$ which has a positive distance from $B$.  Since $u$ is positive and continuous,  we  have \eqref{cw2}.

 Next, we construct a subsolution in $B_1(0)\times [t_0-1,t_0+1]$.  Set
 $$\underline{u}(x,t)=\chi _D(x) u(x,t)+\varepsilon \psi(x,t),$$
  where  $\varepsilon$ is a positive constant,  $$\psi(x,t)=(1-|x|^2)^s_+\eta(t),$$ $\chi_D(x)$ be as defined  in \eqref{cw9} and
$\eta(t)\in C^{\infty}_0( [0,T])$
  satisfies \eqref{eq:o70}.  By a result  in  \cite{LZ}, we have
  $$(-\lap)^s_p(1-|x|^2)^s_+\leq C, ~x\in B_1(0),$$
hence
  \begin{equation}\label{eqo5c}\aligned
  (-\lap)_p^s\psi(x,t)&= \eta^{p-1}(t)(-\lap)^s_p(1-|x|^2)^s_+\\
  &\leq C\eta^{p-1}(t),~(x,t)\in B_1(0)\times [t_0-1,t_0+1],
  \endaligned \end{equation}
where $C$ is a constant
and $0\leq \eta (t)\leq 1$.

 By the definition of $(-\lap)^s_p$,   \eqref{eqo5c} and lemma \ref{lemo5},    for each fixed $t \in [t_0-1,t_0+1]$ and for any $x\in B_1(0) $,   we obtain
  \begin{equation}\label{eq:o110}\aligned
  &(-\lap)^s_p \underline{u}(x,t)=(-\lap)^s_p[\chi_D(x)u(x,t)+\varepsilon \psi(x,t)]\\
  &=C_{n,sp}P.V.\int_{\mathbb R^n}\frac{G(\varepsilon \psi(x,t)-\chi_D(y)u(y,t)-\varepsilon\psi(y,t))}{|x-y|^{n+sp}}dy\\
  &=C_{n,sp}P.V.\{\int_{B_1(0)}\frac{G(\varepsilon \psi(x,t)-\varepsilon\psi(y,t))}{|x-y|^{n+sp}}dy+\int_{D} \frac{G(\varepsilon \psi(x,t)-u(y,t))}{|x-y|^{n+sp}}dy\\
 &~~ +\int_{\mathbb R^n\backslash (B_1(0)\cup D)}\frac{G(\varepsilon \psi(x,t))}{|x-y|^{n+sp}}dy
 +\int_{D}\frac{G(\varepsilon \psi(x,t))}{|x-y|^{n+sp}}dy -\int_{D}\frac{G(\varepsilon \psi(x,t))}{|x-y|^{n+sp}}dy\}\\
 &=(-\lap)^s_p (\varepsilon \psi(x,t))+\int_{D} \frac{G(\varepsilon \psi(x,t)-u(y,t))-G(\varepsilon \psi(x,t))}{|x-y|^{n+sp}}dy\\
 &\leq (-\lap)^s_p (\varepsilon \psi(x,t))-\int_{D} \frac{Cu^{p-1}(y,t) }{|x-y|^{n+sp}}dy\\
 &\leq C\varepsilon ^{p-1} \eta^{p-1}(t)   -C_1,
  \endaligned \end{equation}
where $C_1$ is a positive constant.

For $(x,t)\in B^c_1(0)\times [t_0-1,t_0+1]$, we have
$$
  u(x,t)\geq \chi_D(x) u(x,t) =\underline{u}(x,t).
 $$
It follows from \eqref{eq:o70} that
$$ u(x,t_0-1)\geq \underline{u}(x,t_0-1),~x\in B_1(0).$$

Denote
$$w(x,t)=u(x,t)-\underline{u}(x,t).$$
Combining \eqref{1eq:o2}  and \eqref{eq:o110}, for $(x,t)\in B_1(0)\times [t_0-1,t_0+1]$,  we have
$$\aligned
&\frac{\partial w}{\partial t} + (-\lap)_p^s u(x,t)-(-\lap)_p^s \underline{u}(x,t)\\
 &\geq f(t,u)-\varepsilon \eta'(t) (1-|x|^2)^s_+ -C \varepsilon ^{p-1} \eta^{p-1}(t)   +C_1\\
&= C(x,t) u -\varepsilon \eta'(t) (1-|x|^2)^s_+ -C \varepsilon ^{p-1} \eta^{p-1}(t)   +C_1,
\endaligned $$
where $C(x,t)=\frac{f(t,u)}{u(x,t)}$ is bounded by \eqref{01}.

 Similarly to the proof of the case when $p=2$ in Theorem  \ref{thmo2}. Taking $\varepsilon$ small, we obtain
$$\frac{\partial w}{\partial t} + (-\lap)_p^s u(x,t)-(-\lap)_p^s \underline{u}(x,t)
\geq 0,~(x,t)\in B_1(0)\times [t_0-1,t_0+1]. $$
Hence
$$
\left\{\begin{array}{ll}
\frac{\partial w}{\partial t} + (-\lap)_p^s u(x,t)-(-\lap)_p^s \underline{u}(x,t) \geq 0,~&(x,t)\in B_1(0)\times [t_0-1,t_0+1],\\
w(x,t) \geq 0,~ & (x,t)\in       B^c_1(0)\times [t_0-1,t_0+1] ,\\
w(x,t_0-1) \geq 0,~& x\in B_1(0)  .
\end{array}
\right.
$$
By Theorem \ref{thmoo2}, we derive
$$
w(x,t)\geq 0, ~\mbox{in }~B_1(0)\times [t_0-1,t_0+1].
$$
Therefore
\begin{equation}\label{eq:o10c}
u(x,t) \geq \varepsilon \psi(x,t)=\varepsilon (1-|x|^2)^s_+ \eta (t),~\mbox{in}~B_1(0)\times [t_0-1,t_0+1].
\end{equation}
In particular,
  $$u(x,t_0)\geq \varepsilon (1-|x|^2)^s_+=c_0d^s(x),~ x \in B_1(0) ,$$
where  $d(x)=dist(x,\partial\Omega ) $ and $c_0$ is a positive constant.

Hence if $\nu$ is the outward normal of $\partial \Omega$ at $x_0$, we obtain
 $$\frac{\partial u(x,t_0)}{\partial \nu^s}<0,~\forall~x\in \partial \Omega,~\forall~t_0\in (0,T),$$
since $u(x_0, t_0)=0$.

\end{proof}

Finally, we   briefly explain  how to apply Hopf's lemma for
 parabolic  Laplacian and parabolic $p$-Laplacian in the first step of the method of moving planes.
 Take $\Omega =B_1(0)$ as an example.
Let $$  \Omega_\la=\{x\in \Omega \mid x_1<\la\}.$$
To obtain the radial symmetry of positive solutions to
$$
\left\{\begin{array}{lll}
\frac{\partial u}{\partial t}+(-\lap)^s_p u(x,t)=f(t,u(x,t)), & (x,t)\in \Omega \times (0,T],\\
u(x,t)=0,& (x,t)\in \Omega^c \times (0,T],
\end{array}
\right.
$$
 where $0<s<1,~2\leq p < \infty$, the first step is to show that
for the plane $T_\lambda$ sufficiently close to the left end of  $\Omega$, $i.e.$, for $\lambda$ sufficiently close to  $$\inf \{  x_1 \mid  x \in \Omega\},$$ we have
\be \label{56}
u(x^\la,t)>u(x,t), ~ x\in \Omega_\la,~t\in  (0,T).
\ee
This will provide a starting point to move the plane.

By  applying the Hopf's lemma (Theorem \ref{thmo2}), we have
\be \label{567}
 \frac{\partial u}{\partial x^s_1}(\bar x,t)>0,\bar x\in \partial \Omega,~t\in (0,T).
 \ee
As a consequence, $$\frac{\partial u}{\partial x _1}(\bar x,t)=+\infty,~\bar x\in \partial \Omega,~t\in (0,T). $$
Then by the continuity of $\frac{\partial u}{\partial x^s_1}$ in some proper sense, it is natural to expect that
\be \label{990}
\frac{\partial u}{\partial x_1}(x,t)>0, \mbox{ for } ~x~ \mbox{sufficiently  close to }~\bar x ~ \mbox{in}~ \Omega,~t\in (0,T),
\ee
which implies \eqref{56} immediately.

We will prove \eqref{990} in a near future. While at this moment, the following two simple examples may shed some light on its validity.

It is wellknown that
$$(-\lap)^s (x_1)_+^s =0 \; \mbox{ in the right half space where } x_1>0,$$
and
$$ (-\lap)^s (1-|x|^2)^s_+ = \mbox{constant, in the unit ball } |x|<1.$$
Here $0<s<1$, and
$$(x_1)^s_+=
\left\{\begin{array}{lll} x_1^s, & x_1>0,\\
 0,&x_1\leq 0.
  \end{array}\right.
  $$

  Let $\eta(t)$ be any positive smooth function.

(a)  Consider
 $$\Psi_1(x,t)= (x_1)^s_+ \eta(t).$$
 It satisfies the equation
 $$\frac{\partial \Psi_1}{\partial t} + (-\lap)^s \Psi_1(x,t) = f_1(t, \Psi_1 (x,t)$$
 for some function $f_1$.

For $x=(x_1,x_2, \cdots,x_n)\in  \{x\in \mathbb R^n \mid x_1>0\},~t\in (0,T)$, we derive
$$ \frac{ \partial \Psi_1(x,t)}{\partial x_1}=\frac{ \partial (x_1)^s_+\eta(t)}{\partial x_1}
=\frac{s\eta(t)}{x_1^{1-s}}\rightarrow +\infty,~\mbox{as}~x_1 \rightarrow 0.$$
And obviously, for $x_1>0$, we have
$$\frac{ \partial \Psi_1(x,t)}{\partial x_1} >0. $$

(b) Then consider
 $$\Psi_2(x,t)=(1-|x|^2)^s_+\eta(t),$$
 then it satisfies the equation
  $$\frac{\partial \Psi_2}{\partial t} + (-\lap)^s \Psi_2(x,t) = f_2(t, \Psi_2 (x,t)$$
 for some function $f_2$.

For $x\in B_1(0)$ close to the left end of the region, i.e. $x_1$ close to $-1$, we have
$$ \frac{ \partial \Psi_2(x,t)}{\partial x_1}=\frac{ \partial (1-|x|^2)^s_+\eta(t)}{\partial x_1}
=\frac{-2 s x_1 \eta(t)}{(1-|x|^2)^{1-s}} \rightarrow +\infty,~\mbox{as}~|x| \rightarrow 1,$$
and at these points $x$,
$$\frac{ \partial \Psi_2(x,t)}{\partial x_1} > 0.$$

\section{Asymptotic Hopf's lemma for  antisymmetric solutions}
We start this section with the proof of the following
\begin{thm}  (Asymptotic Hopf's lemma for antisymmetric functions)
Assume that $ w_\lambda (x,t) \in(C^{1,1}_{loc}( \tilde \Sigma_\la)\cap {\mathcal L}_{2s} )\times C^1((0,\infty))$ is bounded and satisfies
\begin{equation}\label{ww2020c}
\left\{\aligned
&\frac{\partial w_\lambda }{\partial t}+(-\Delta)^sw_\lambda(x,t)=c_\lambda(x,t)w_\lambda(x,t),  &  (x,t) \in  \tilde \Sigma_{\lambda} \times (0,\infty),\\
&w_\lambda(x^\lambda,t)=-w_\lambda(x,t),  & (x,t) \in  \tilde \Sigma_{\lambda} \times (0,\infty),\\
&\underset{t \rightarrow \infty}{\underline{\lim}} w_\lambda(x,t)\geq 0 , &  x  \in  \tilde \Sigma_{\lambda} ,
\endaligned \right.
\end{equation}
where $c_\lambda(x,t) $ is bounded uniformly  for $t$.
If $\psi_\la$ is nonnegative and  $\psi_\lambda>0$ somewhere in $\tilde \Sigma_\la$. Then $$ \frac{\partial \psi_\lambda}{\partial \nu}(x)<0,~x\in \partial \tilde \Sigma_\lambda,$$ where $\nu$ is an outward normal vector on $\partial \tilde \Sigma_\la$.
\end{thm}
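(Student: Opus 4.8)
The plan is to reduce the asymptotic statement to a static one about $\psi_\lambda$ and then run a subsolution argument in the spirit of Section~2, but now adapted to the half-space antisymmetric setting. First I would record what is known about $\psi_\lambda$: since $w_\lambda$ is bounded and satisfies \eqref{ww2020c}, by the regularity theory of \cite{XFR} the $\omega$-limit $\psi_\lambda$ is a well-defined function in $C_0(\mathbb R^n)$, it is antisymmetric with respect to $T_\lambda$ (i.e. $\psi_\lambda(x^\lambda)=-\psi_\lambda(x)$), and it satisfies the stationary equation
\begin{equation}\label{eq:psieq}
(-\lap)^s\psi_\lambda(x)=\bar c_\lambda(x)\psi_\lambda(x),\quad x\in\tilde\Sigma_\lambda,
\end{equation}
for some bounded $\bar c_\lambda$ (a limit of translates of $c_\lambda$; here the uniform-in-$t$ bound on $c_\lambda$ is used). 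Together with the hypotheses that $\psi_\lambda\ge 0$ and $\psi_\lambda>0$ somewhere in $\tilde\Sigma_\lambda$, a strong-maximum-principle argument (as in \eqref{eq:j6} of Theorem~\ref{thmoo1}, applied to the stationary equation) forces $\psi_\lambda>0$ everywhere in $\tilde\Sigma_\lambda$; if $\psi_\lambda$ vanished at an interior point, antisymmetry plus the integral expression of $(-\lap)^s$ would produce a strict sign contradiction in \eqref{eq:psieq}, exactly as in the proof of Theorem~\ref{thmoo1}.

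Next I would localize near an arbitrary boundary point $x^0\in\partial\tilde\Sigma_\lambda=T_\lambda$. Pick a ball $B\subset\tilde\Sigma_\lambda$ tangent to $T_\lambda$ at $x^0$; after translation and rescaling assume $B=B_1(z_0)$ with $z_0\in\tilde\Sigma_\lambda$ and $x^0\in\partial B_1(z_0)\cap T_\lambda$. On a compact subset $D$ of $\tilde\Sigma_\lambda$ staying a positive distance from $B$ we have $\psi_\lambda\ge c_1>0$. The comparison function will be the antisymmetric analogue of the one in Section~2: set
$$\underline\psi(x)=\varepsilon\,(1-|x-z_0|^2)^s_+\;-\;\varepsilon\,(1-|x^\lambda-z_0^\lambda|^2\,\cdot)\ \ \text{(the reflected bump)},$$
more precisely $\underline\psi(x)=\varepsilon\big[(1-|x-z_0|^2)^s_+-(1-|x^\lambda-z_0|^2_{\ast})^s_+\big]$ built from the torsion-type function $(1-|\cdot|^2)^s_+$ and its reflection across $T_\lambda$, so that $\underline\psi$ is antisymmetric and vanishes outside $B_1(z_0)\cup B_1(z_0)^\lambda$. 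Using the identity $(-\lap)^s(1-|x|^2)^s_+=\text{const}$ on the ball (from \cite{G}) and the fact that the mass of $\psi_\lambda$ on $D$ contributes a strictly negative term $-C_1$ to $(-\lap)^s[\chi_D\psi_\lambda]$ as in \eqref{eqo6}, one checks for $\varepsilon$ small that $v:=\psi_\lambda-\chi_D\psi_\lambda-\underline\psi$ is a supersolution of the associated linear equation in $B_1(z_0)$, is nonnegative outside $B_1(z_0)$ (here antisymmetry and $\psi_\lambda\ge0$ in $\tilde\Sigma_\lambda$ are used to control $v$ across $T_\lambda$), hence $v\ge0$ in $B_1(z_0)$ by the maximum principle. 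This yields $\psi_\lambda(x)\ge\varepsilon(1-|x-z_0|^2)^s_+\ge c_0\,d^s(x)$ for $x\in B_1(z_0)$ near $x^0$, where $d(x)=\mathrm{dist}(x,T_\lambda)$.

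Finally, since $\psi_\lambda(x^0)=0$ (antisymmetry forces $\psi_\lambda$ to vanish on $T_\lambda$), the one-sided lower bound $\psi_\lambda(x)\ge c_0 d^s(x)$ combined with the $C^1$ regularity of $\psi_\lambda$ up to $T_\lambda$ in the transversal direction gives
$$\frac{\partial\psi_\lambda}{\partial\nu}(x^0)=-\lim_{x\to x^0}\frac{\psi_\lambda(x)}{d(x)}\le -\lim_{x\to x^0}c_0\,d^{s-1}(x)=-\infty<0,$$
which is the claim (and in fact it is $-\infty$, consistent with the remark after Theorem~\ref{thmo2}; if only the weaker statement $\partial_\nu\psi_\lambda<0$ is desired one keeps the finite version via the mean value theorem). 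The step I expect to be the genuine obstacle is the passage to the limit: justifying that $\psi_\lambda$ really solves a stationary equation \eqref{eq:psieq} with bounded coefficient and, more delicately, that the one-sided boundary estimate $\psi_\lambda\ge c_0 d^s$ survives the limit uniformly in the approximating times $t_k$. This requires the parabolic regularity of \cite{XFR} to be strong enough to pass $(-\lap)^s$ and the boundary behavior to the limit; handling the nonlocal tail term carefully (using $w_\lambda\in\mathcal L_{2s}$ uniformly and the antisymmetry to tame contributions from $\tilde\Sigma_\lambda^{\,c}$) is where the argument must be done with care. The interior strong maximum principle and the subsolution construction are then routine adaptations of Theorem~\ref{thmoo1} and the $p=2$ case of Theorem~\ref{thmo2}.
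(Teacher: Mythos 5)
There are two genuine gaps in your argument, and both are fatal as written. First, you reduce to a \emph{stationary} equation $(-\lap)^s\psi_\lambda=\bar c_\lambda\psi_\lambda$. This is not justified: an element of the $\omega$-limit set of a parabolic flow is in general not an equilibrium, so $\psi_\lambda$ need not solve any elliptic equation. The paper avoids this entirely by staying parabolic: it sets $w_k(x,t)=w_\lambda(x,t+t_k-1)$, uses the regularity theory of \cite{XFR} to extract a limit $w_\infty(x,t)$ solving the parabolic equation on $\tilde\Sigma_\lambda\times[0,2]$ with $w_\infty(\cdot,1)=\psi_\lambda$, and runs the whole comparison argument for $w_\infty$ on the time slab $[0,2]$, invoking the parabolic maximum principle for antisymmetric functions from \cite{CWNH}. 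Your worry at the end about ``passing $(-\lap)^s$ and the boundary behavior to the limit'' is precisely the symptom of having collapsed the time variable too early; the correct move is not to collapse it at all.

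Second, and more seriously, your barrier is the wrong one for the antisymmetric geometry, and the bound you extract from it is false. The function $(1-|x-z_0|^2)^s_+$ placed in a ball tangent to $T_\lambda$ is the right barrier for the Dirichlet problem of Section~2, where $u\equiv 0$ on a full-measure exterior set and the natural boundary rate is $d^s$. Here $\psi_\lambda$ is \emph{odd} across $T_\lambda$, not zero outside $\tilde\Sigma_\lambda$, and an antisymmetric solution with the stated regularity is differentiable across the hyperplane, so it behaves like $d(x)$ near $T_\lambda$; the estimate $\psi_\lambda\geq c_0\,d^s(x)$ with $s<1$ cannot hold, and your conclusion $\partial_\nu\psi_\lambda=-\infty$ contradicts both the regularity of $\psi_\lambda$ and the statement being proved (which asserts a finite, strictly negative first-order normal derivative). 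The exterior comparison also breaks: with a one-sided bump the reflected half-space carries $\psi_\lambda<0$, so you cannot conclude $v\ge 0$ outside the ball, and the maximum principle you need is the one for antisymmetric functions, which requires the comparison function itself to be antisymmetric. The paper's construction fixes all of this at once: it centers a ball $B_{2\varepsilon}(0)$ \emph{on} the plane $T_0$, uses the antisymmetric barrier $g(x)=x_1\zeta(x)$ together with the cutoff $\chi_{D\cup D_\lambda}\tilde w$ (whose fractional Laplacian is $\le -C_1$ near $T_0$ precisely because of the cancellation $|x-y|<|x-y^\lambda|$ between $D$ and its reflection $D_\lambda$), and obtains the \emph{linear} lower bound $w_\infty(x,1)\ge e^{-m}\delta\,x_1$, from which $\partial_{x_1}\psi_\lambda(0)>0$ follows by taking a difference quotient. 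Your interior positivity step ($\psi_\lambda>0$ in $\tilde\Sigma_\lambda$) is also not needed in the paper's route; only positivity on some compact $D$ is used.
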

Recall that $\tilde \Sigma_\lambda$ is the region to the right of the plane $T_\lambda$.

\begin{proof}  Without  loss of generality, we may assume that $\lambda=0$, and it suffices to show that $\frac{\partial \psi_\lambda }{\partial x_1}(0)>0$.

For any  $\varphi(x)\in \omega(u)$, there  exists $t_k$ such that
$w_\lambda(x,t_k)\rightarrow \psi_\lambda(x)$ as $t_k\rightarrow \infty$.
Set $$w_k (x,t)=w_\lambda (x,t+t_k-1) .$$
Then $$
\frac{\partial w _k}{\partial t}+(-\lap)^s w _k (x,t)=c  _k(x,t)w _k(x,t),~(x,t)\in \tilde \Sigma_\la  \times [0,\infty),
$$
where $c_k(x,t)=c_\lambda(x,t+t_k-1)$ is bounded uniformly for $t$.
From regularity theory for parabolic equations \cite{XFR},  we conclude that there is a subsequence of $w_ k(x,t)$ (still denoted by $w_ k(x,t)$) which converges uniformly to a function $w_ \infty(x,t)$ in $\tilde \Sigma_\lambda\times[0,2]$
  and
  \begin{align*}
&\frac{\partial w_ k}{\partial t}(x,t)+(-\Delta)^sw_ k(x,t)\rightarrow \frac{\partial w_ \infty}{\partial t}(x,t)+(-\Delta)^sw_ \infty(x,t),\\
&c_k(x,t)\rightarrow c_\infty(x,t),\ \ \ \ \mbox{as}\ k\rightarrow\infty.
\end{align*}
Hence
\begin{equation}\label{eq:wc31}
 \frac{\partial w _\infty}{\partial t}+(-\lap)^s w _\infty (x,t)=c  _\infty(x,t)w _\infty(x,t),~(x,t)\in \tilde \Sigma_\la  \times [0,2].
\end{equation}
In particular,
$$w_\lambda(x,t_k)=w_k(x,1)\rightarrow w_\infty(x,1)=\psi_\lambda(x)\, \ \mbox{as}\ k\rightarrow \infty.$$

Let $$\tilde w(x,t)=e^{mt} w_\infty(x,t),~m>0.$$
Since $c_\infty$ is bounded, we can  choose $m$ such that
   \be \label{633}
   m   +c _\infty(x,t) \geq 0.
   \ee
By the third condition in \eqref{ww2020c}, we have
   \be \label{635}
   w_\infty (x,t) \geq 0, ~(x,t) \in \tilde \Sigma_\la \times [0,2].
\ee
Combining   \eqref{eq:wc31},   \eqref{633} and \eqref{635}, one has
\begin{equation}\label{eq:wcc66}
  \frac{\partial \tilde w}{\partial t}+(-\lap)^s \tilde w (x,t)
=   (m+c _\infty(x,t) ) \tilde w (x,t) \geq 0,~(x,t) \in\tilde  \Sigma_\la  \times [0,2].
 \end{equation}

 Since $\psi_\lambda>0$ somewhere in $\tilde \Sigma_\lambda$, by continuity, there exists a set $D\subset \subset \tilde\Sigma_{\lambda}$ such that
\begin{equation}\label{meq01}
\psi_\lambda(x)>c>0,\ \, \  x \in D,
\end{equation}
with positive constant $c$. We may assume  $ dist(T_0,\partial D)>2\varepsilon$,  where $\varepsilon$ is a positive constant.  Let $B_\varepsilon(0)$ and $B_{2\varepsilon}(0)$ be balls with the origin as the center and radius of $\varepsilon$ and $2\varepsilon$, respectively.

By the continuity of $w_\infty(x,t)$ (see \cite{XFR}), there exist $0<\varepsilon_o<1$, such that
$$ w_\infty (x,t) > c/2, \;\; (x,t) \in D \times [1-\varepsilon_o,1+\varepsilon_o].$$
For simplicity of notation, we may assume that
\be w_\infty (x,t) > c/2, \;\; (x,t) \in D \times [0,2].
\label{A50}
\ee
  Let    $D_\lambda$  be the reflection of   $D$ about the plane $T_\lambda$ for any time $t$.
  For convenience, for each $t\in [0,2]$,  we show the following Figure 2.
\begin{center}
\begin{tikzpicture}[node distance = 0.3cm]
\draw [->, semithick] (-4,0) -- (4,0) node[right] {$x_1$};
\draw [ semithick] (0,-2.4) -- (0,3) node[above] {$T_0$};
\path (2.3,2.7) node [ font=\fontsize{10}{10}\selectfont] {$\tilde\Sigma_0=\{ x\in \mathbb R^n\mid x_1>0\}$};
\draw (0,0) circle (0.8);
\draw (0,0) circle (1.8);
\fill[blue!25] (0,-1.8) arc(-90:90:1.8) node [black] [below right] at (0.2,-1.7 ) {$ B_{2\varepsilon}(0)\cap \tilde\Sigma_0 $};
\node (test) at (-2,2.7) {$B_{2\varepsilon}(0)$};
    \draw [->,thin] (-1.4,1) to [in = 0, out = 30] (-2,2.4) (test);
\node (test) at (-1.9,-1.8) {$B_\varepsilon(0)$};
    \draw [->,thin] (-0.6,-0.4) to [in = 90, out = 90] (-1.8,-1.6) (test);
\draw (-3,1) ellipse  [x radius=1cm, y radius=0.5cm];
\fill [orange] (-3,1) ellipse  [x radius=1cm, y radius=0.5cm];
\draw (3,1) ellipse  [x radius=1cm, y radius=0.5cm];
\fill [orange] (3,1) ellipse  [x radius=1cm, y radius=0.5cm];
\path (-2.5,1) node [ font=\fontsize{10}{10}\selectfont] {$D_\lambda$};
\path (2.5,1) node [ font=\fontsize{10}{10}\selectfont] {$D$};
\node [below=1cm, align=flush center,text width=8cm] at (0,-1.3)
        {$Figure$ 2 };
\end{tikzpicture}
\end{center}
 Denote $g(x)=x_1 \zeta(x) $, where
 $$
  \zeta(x)= \zeta(|x|)= \left\{
\begin{array}{ll} 1,~ & |x|<\varepsilon, \\
0,~& |x|\geq 2\varepsilon,\end{array}
\right.$$
 and
 $$0\leq \zeta(x) \leq 1,~  \zeta(x) \in C^\infty_0 (B_{2\varepsilon}(0)).$$
 Obviously, $g(x)$ is an anti-symmetric function with respect to plane $T_0$, i.e.
 $$g(-x_1, x_2, \cdots x_n) = - g(x_1, x_2, \cdots x_n).$$

Denote
$$\underline{w}(x,t)=\chi_{D\cup D_\lambda}(x) \tilde w(x,t)+\delta \eta(t)g(x),$$
where
 $$\chi_{D\cup D_\lambda}(x)=\left\{\begin{array}{ll}
1,& \quad x\in D\cup D_\lambda,\\
 0,&\quad x\not\in D \cup D_\lambda,
\end{array}
\right.$$
 and
 $\eta(t)\in C^\infty_0([1-\varepsilon_o,1+\varepsilon_o])$ satisfies
\be \label{eq:wco70}
  \eta(t)=\left\{\begin{array}{lll} 1,& t \in [1-\frac{\varepsilon_o}{2}, 1+ \frac{\varepsilon_o}{2}],\\
 0,&  t\not \in [1-\varepsilon_o,1+\varepsilon_o].
  \end{array}\right.\ee
Since $g(x)$ is a $C^\infty_0 (B_{2\varepsilon}(0))$ function, we have
\begin{equation}\label{eq:w4}
|(-\lap)^sg(x)|\leq C_0.
\end{equation}
By the definition of fractional Laplacian  and \eqref{A50}, we derive that for each fixed $t \in [0,2]$ and for any $ x  \in  B_{2\varepsilon}(0) \cap \tilde \Sigma _\la$
\begin{align}\label{eq:cw111}
&(-\lap)^s(\chi_{D\cup D_\lambda}\tilde{w}(x,t))\nonumber\\
&= C_{n,s} P.V. \int_{\mathbb R^n}\frac{\chi_{D\cup D_\lambda}(x)\tilde{w}(x,t)-\chi_{D\cup D_\lambda}(y)\tilde{w}(y,t)}{|x-y|^{n+2s}}\;dy\nonumber\\
&= C_{n,s} P.V. \int_{\mathbb R^n}\frac{-\chi_{D\cup D_\lambda}(y)\tilde{w}(y,t)}{|x-y|^{n+2s}}\;dy\nonumber\\
&= C_{n,s} P.V. \int_{D}\frac{-\tilde{w}(y,t)}{|x-y|^{n+2s}}\;dy+\int_{D}\frac{-\tilde{w}(y^\lambda,t)}{|x-y^\lambda|^{n+2s}}\;dy\nonumber\\
&= C_{n,s} P.V. \int_{D}\left(\frac{1}{|x-y^\lambda|^{n+2s}}-\frac{1}{|x-y|^{n+2s}}\right)\tilde{w}(y,t)\;dy\nonumber\\
&\leq -C_1,
\end{align} where $C_1$ is a positive constant.
 For $(x,t)\in (B_{2\varepsilon}(0) \cap \tilde \Sigma_\la)\times [0,2]$, by \eqref{eq:w4} and \eqref{eq:cw111}, we obtain
$$\aligned   \frac{\partial \underline{w}}{\partial t} +(-\lap)^s \underline{w}(x,t)
=&\delta \eta'(t) g(x)+(-\lap)^s( \chi_{D\cup D_\lambda}\tilde{w}(x,t))+ \delta \eta(t)(-\lap)^s g(x)
 \\
\leq& \delta \eta'(t) g(x) -C_1+ \delta \eta(t) C_0 .
\endaligned $$
Hence, taking $\delta$ sufficiently small, we derive
\begin{equation}\label{eq:wc33}\aligned
  \frac{\partial \underline{w}}{\partial t} +(-\lap)^s \underline{w}(x,t) \leq0 ,~ (x,t) \in (B_{2\varepsilon}(0) \cap \tilde\Sigma_\la )\times [0,2].
\endaligned \end{equation}
Set $$v(x,t)=\tilde{w}(x,t)-\underline{w}(x,t).$$
Obviously, $v(x,t)=-v(x^\lambda,t)$. From  \eqref{eq:wcc66} and \eqref{eq:wc33}, we derive  that
$v(x,t)$ satisfies
\begin{equation}\label{cw1}
\frac{\partial v}{\partial t}(x,t)+(-\Delta)^sv(x,t) \geq0,~(x,t) \in (B_{2\varepsilon}(0) \cap\tilde \Sigma_\la )\times[0,2].
\end{equation}
Also, by the definition of $\underline{w}(x,t)$, we have
$$v(x,t)\geq 0,\ \ (x,t) \in (\tilde\Sigma_\lambda\setminus (B_{2\varepsilon}(0) \cap \tilde\Sigma_\la))\times[0,2]$$
and $$
v(x,0)\geq 0,\ \  x\in \tilde \Sigma_\lambda.
$$
Now,  we apply the following lemma to $v(x,t)$.
\begin{lem}(Maximum principle for antisymmetric functions)\cite{CWNH}
Let $\Omega$ be a bounded domain in $\Sigma_\la$. Assume that   $w_\la(x,t)\in (C^{1,1}_{loc}(\Omega)\cap {\mathcal L}_{2s})\times C^1([0,\infty))$ is lower semi-continuous in $x$ on $\bar{\Omega}$ and satisfies
 \be \label{cw2029}
 \left\{
\begin{array}{ll}
 \frac{\partial w_\la}{\partial t}(x,t)+(-\lap)^s w_\la(x,t)\geq
 c_\la(x,t)w_\la(x,t), & (x,t)\in \Omega \times (0,\infty),\\
 w_\la(x^\la,t)=-w_\la(x,t), &  (x,t)\in  \Sigma_\la  \times [0,\infty),\\
     w_\la(x,t)\geq 0, & (x,t)\in ( \Sigma_\la \backslash \Omega )\times [0,\infty),\\
 w_\la(x,0) \geq 0, &  x \in  \Omega .
 \end{array}
\right.\ee
  If $c_\la(x,t) $ is bounded from above, then
\be \label{eq:cw2033}
w_\la(x,t)\geq 0,~(x,t)\in \Omega\times[0,T],\ \ \forall\ T>0.
\ee
\end{lem}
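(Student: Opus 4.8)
The plan is to run the familiar contradiction argument at the first negative minimum, the one extra ingredient being an exponential change of unknown that absorbs the zeroth-order coefficient so that no narrowness of $\Omega$ is needed. First I would reduce to the case $c_\la\le 0$. Since $c_\la$ is bounded from above, pick $M\ge\sup c_\la$ and set $v(x,t)=e^{-Mt}w_\la(x,t)$. Then $v$ is again antisymmetric about $T_\la$, still satisfies $v\ge 0$ on $(\Sigma_\la\setminus\Omega)\times[0,\infty)$ and $v(\cdot,0)\ge 0$, and
$$\frac{\partial v}{\partial t}+(-\lap)^s v=e^{-Mt}\Big(\frac{\partial w_\la}{\partial t}+(-\lap)^s w_\la-M w_\la\Big)\ge (c_\la-M)\,v ,$$
with $c_\la-M\le 0$; so it is enough to prove \eqref{eq:cw2033} under the extra hypothesis $c_\la\le 0$, which I assume henceforth.

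Next, fix $T>0$ and suppose for contradiction that $w_\la<0$ somewhere on $\bar\Omega\times[0,T]$. Because $w_\la$ is lower semicontinuous in $x$ on the compact set $\bar\Omega$ and continuous in $t$, the minimum of $w_\la$ over $\bar\Omega\times[0,T]$ is attained at some $(\bar x,\bar t)$ with $w_\la(\bar x,\bar t)<0$. Since $w_\la\ge 0$ on $(\Sigma_\la\setminus\Omega)\times[0,\infty)$ and $w_\la\equiv 0$ on $T_\la$ by antisymmetry — hence $w_\la\ge 0$ on $\partial\Omega$ — the point $\bar x$ lies in the interior of $\Omega$; and $\bar t\in(0,T]$ since $w_\la(\cdot,0)\ge 0$. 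At this interior spatial minimum one has $\nabla_x w_\la(\bar x,\bar t)=0$ and $\frac{\partial w_\la}{\partial t}(\bar x,\bar t)\le 0$.

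The core step is to show $(-\lap)^s w_\la(\bar x,\bar t)<0$. I would fold the defining integral across $T_\la$: writing $\mathbb R^n=\Sigma_\la\cup T_\la\cup\tilde\Sigma_\la$, substituting $y\mapsto y^\la$ in the integral over $\tilde\Sigma_\la$ and using $w_\la(y^\la,\bar t)=-w_\la(y,\bar t)$ gives
$$(-\lap)^s w_\la(\bar x,\bar t)=C_{n,s}\!\int_{\Sigma_\la}\!\big(w_\la(\bar x,\bar t)-w_\la(y,\bar t)\big)\Big(\tfrac{1}{|\bar x-y|^{n+2s}}-\tfrac{1}{|\bar x-y^\la|^{n+2s}}\Big)dy+2C_{n,s}\,w_\la(\bar x,\bar t)\!\int_{\Sigma_\la}\!\tfrac{dy}{|\bar x-y^\la|^{n+2s}} .$$
In the first integral the factor $w_\la(\bar x,\bar t)-w_\la(y,\bar t)\le 0$ for every $y\in\Sigma_\la$ (for $y\in\Omega$ it is the global minimum, and $w_\la(y,\bar t)\ge 0$ when $y\in\Sigma_\la\setminus\Omega$), while $|\bar x-y^\la|\ge|\bar x-y|$ for $\bar x,y$ on the same side of $T_\la$ makes the kernel difference nonnegative; as $\nabla_x w_\la(\bar x,\bar t)=0$ kills the odd first-order singular part, the integrand is absolutely integrable, so this integral is $\le 0$. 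The weight $\int_{\Sigma_\la}|\bar x-y^\la|^{-n-2s}dy=\int_{\tilde\Sigma_\la}|\bar x-z|^{-n-2s}dz$ in the second term is finite (because $\mathrm{dist}(\bar x,T_\la)>0$) and strictly positive, and $w_\la(\bar x,\bar t)<0$, so the second term is strictly negative. Hence $(-\lap)^s w_\la(\bar x,\bar t)<0$, so $\frac{\partial w_\la}{\partial t}(\bar x,\bar t)+(-\lap)^s w_\la(\bar x,\bar t)<0$; but $c_\la\le 0$ and $w_\la(\bar x,\bar t)<0$ force $c_\la(\bar x,\bar t)w_\la(\bar x,\bar t)\ge 0$, contradicting the differential inequality in \eqref{cw2029}. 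Therefore $w_\la\ge 0$ on $\bar\Omega\times[0,T]$, and since $T>0$ was arbitrary this is \eqref{eq:cw2033}.

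I expect the conceptual crux to be the reduction to $c_\la\le 0$: dropping the first (nonpositive) integral above would only give $(-\lap)^s w_\la(\bar x,\bar t)\le 2C_{n,s}w_\la(\bar x,\bar t)\int_{\Sigma_\la}|\bar x-y^\la|^{-n-2s}dy$, whose coefficient is $\Omega$-dependent, and balancing it against $c_\la$ would require a smallness (narrow-region) assumption; the $e^{-Mt}$ substitution is exactly what removes that restriction and lets $\Omega$ be an arbitrary bounded subdomain of $\Sigma_\la$. The only remaining technical care is the well-definedness of the folded principal-value integral and the localization of the minimum to the interior at a positive time, both of which follow routinely from $w_\la\in (C^{1,1}_{loc}(\Omega)\cap{\mathcal L}_{2s})\times C^1([0,\infty))$ together with the lower semicontinuity and the exterior/initial sign conditions in \eqref{cw2029}.
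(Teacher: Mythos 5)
Your proof is correct. Note that the paper itself does not prove this lemma but imports it from \cite{CWNHomitted} (the reference labelled CWNH), so there is no in-paper proof to compare against; your argument --- the $e^{-Mt}$ reduction to $c_\la\le 0$, evaluation at a negative minimum $(\bar x,\bar t)$, and the folding of $(-\lap)^s$ across $T_\la$ into a nonpositive ``difference'' integral plus the strictly negative term $2C_{n,s}\,w_\la(\bar x,\bar t)\int_{\Sigma_\la}|\bar x-y^\la|^{-n-2s}\,dy$ --- is exactly the standard proof used in that cited source, and it is the natural antisymmetric analogue of the paper's own proof of its Theorem 2.1.
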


As an immediate cossequence of this lemma with $c_\la(x,t)=0$, we obtain
 $$
v(x,t)\geq0,\ \ (x,t) \in  (B_{2\varepsilon}(0) \cap \tilde \Sigma_\la)\times[0,2].
$$
It implies that
$$e^{mt}w_\infty(x,t)-\delta g(x)\eta(t)\geq0, \ \,\ (x,t) \in  (B_{2\varepsilon}(0) \cap \tilde\Sigma_\la)\times[0,2].$$
In particular,
$$ w_\infty(x,1)\geq e^{-m}\delta g(x) ,\ \, \  x \in  B_{2\varepsilon}(0) \cap \tilde \Sigma_\la$$
 and
 $$ w_\infty(x,1)\geq e^{-m}\delta x_1  ,\ \, \  x \in  B_{\varepsilon}(0) \cap \tilde \Sigma_\la.$$
Since
$w_\infty(x,1)\equiv 0,~x\in T_0$, particular, $w_\infty(0,1)=0$.
Hence
$$\frac{w_\infty(x,1)-0}{x_1-0}\geq e^{-m} \delta   >0 ,~ x \in B_\varepsilon(0) \cap \tilde \Sigma_\la.$$
Obviously, no matter how small $\varepsilon$ is, we have
$$
 \frac{\partial \psi_\lambda }{\partial x_1}(0)>0.
$$
Therefore,
$$ \frac{\partial \psi_\lambda }{\partial \nu}(x)<0,~x\in \partial \tilde \Sigma_\la.$$
\end{proof}

Finally, to illustrate how the asymptotic Hopf's lemma for anti-symmetric functions can be employed in the second step of the asymptotic method
of moving planes, we consider the following  example:
\begin{equation}\label{eqcw1}
\left\{\begin{array}{lll}
\frac{\partial u}{\partial t}+(-\lap)^s u(x,t)=f(t,u(x,t)), & \mbox{} (x,t) \in \mathbb R^n \times (0,\infty),\\
u(x,t)>0,& \mbox{} (x,t) \in \mathbb R^n\times (0,\infty) .
\end{array}\right.
\end{equation}

Under certain conditions on $f $  and  $\underset{|x|\rightarrow \infty }{\lim}~ u(x,t) =0 $, we want to show that   positive bounded solutions are
asymptotically symmetric about some point in $\mathbb R^n$. That is, all  $\varphi(x) \in \omega(u)$  are radially symmetric and decreasing about some point in $\mathbb R^n$.

To compare the values of $u(x,t)$ with $u(x^{\lambda},t)$, let
$$ w_\lambda (x,t) = u(x^\lambda, t) - u(x,t). $$
Obviously, $w_\lambda(x,t)$     satisfies
$$\frac{\partial w_\lambda}{\partial t}(x,t)+(-\Delta)^s w_{\lambda}(x,t) = c_{\lambda}(x,t) w_{\lambda}(x,t) , \;\; x \in \Sigma_{\lambda}.$$
For each $\varphi (x) \in \omega(u)$,
denote $$ \psi_\lambda(x)=\varphi(x^\lambda)-\varphi(x)=\varphi_\lambda(x)-\varphi(x),$$
which is an $\omega$-limit of $w_\lambda (x,t)$.

To obtain the asymptotic symmetry of solutions to \eqref{eqcw1} in the  whole space, the first step is to show that
for $\lambda$ sufficiently close to either $-\infty$ or $\infty$,   we have
\be \label{eq:jj5}
\psi_\lambda(x)\geq 0,~x\in \Sigma_\lambda.
\ee
This provides a starting position to move the plane.

In the second step, we move the plane $T_\lambda$ to the right as long as inequality  \eqref{eq:jj5}  holds to its rightmost limiting position $T_{\lambda^-_0}$, where
$$\lambda_0^-=\sup\{\lambda \mid   \psi_\mu(x) \geq  0,~\forall~x\in \Sigma_\mu,~\mu \leq \lambda\}. $$
To show that there is at least one $\varphi \in \omega(u)$ which is symmetric about the plane $T_{\lambda^-_0}$,  or
\begin{equation}\label{w0}
 \psi_{\lambda^-_0}(x) \equiv 0 , \;\; x \in \Sigma_{\lambda^-_0},
 \end{equation}
one usually uses a contradiction argument. Suppose (\ref{w0}) is false, then for all $\varphi \in \omega(u)$,
$$ \psi_{\lambda^-_0}(x) > 0 , \;\; \mbox{ somwhere in } \; \Sigma_{\lambda^-_0}.$$
It follows from the {\em asymptotic Hopf lemma for anti-symmetric functions}, we have
\begin{equation}
\frac{\partial \psi_{\lambda^-_0}}{\partial \nu} (x^0) < 0
\label{A10}
\end{equation}
for any point $x^0$ on the boundary of $\Sigma_{\lambda^-_0}$.

Furthermore,  applying an  \emph{asymptotic strong maximum principle for antisymmetric function} (Theorem 6 in \cite{CWNH}), we have
\be\label{229}
\psi_{\lambda^-_0} (x) > 0 , \;\; \forall \, x \in \Sigma_{\lambda^-_0},~\forall~\varphi \in \omega(u) .
\ee

On the other hand,  by the definition of $\lambda^-_0$, there exists a sequence $\lambda_k \searrow\lambda^-_0$, $x^k \in \Sigma_{\lambda_k}$, and $\psi^k_{\la_k} $ (corresponding to $\varphi^k \in \omega(u)$), such that
\begin{equation}
\psi^k_{\lambda_k}(x^k) = \min_{\Sigma_{\lambda_k}} \psi_{\lambda_k}(x)  < 0  \; \mbox{ and } \; \grad \psi^k_{\lambda_k}(x^k) = 0 .
\label{wxk}
\end{equation}

Under the assumption of  $f_u(t,\cdot)<-\sigma(>0)$, by \emph{asymptotic maximum principle near infinity} (Theorem 4 in \cite{CWNH}), one knows that $\{x^k\}$ is bounded. Then it implies  that   $\{x^k\}$   converges to some point $x^0 $.
Due to the compactness of $\omega(u)$ in $C_0(\mathbb R^n)$, there exists $\psi^0_{\lambda^-_0} $ (corresponding to some $\varphi^0\in \omega(u)$),  such that
$$ \psi^k_{\la_k}(x^k)\rightarrow \psi^0_{\la^-_0}(x^0)~\mbox{as}~k\rightarrow \infty.$$
 Hence from (\ref{wxk}), we have
\be \label{006}
\psi^0_{\lambda^-_0}(x^0) \leq 0
\ee
and
\be \label{007}
\nabla \psi^0_{\lambda^-_0}(x^0) = 0 .
\ee
 It follows from  \eqref{229} and \eqref{006} that $x^0 \in \partial \Sigma_{\lambda^-_0}$.
Now \eqref{007} contradicts (\ref{A10}). Therefore (\ref{w0}) must be valid.

\section*{References}


\begin{thebibliography}{99}
\bibitem{B} I. Birindelli, Hopf's Lemma and Anti-maximum Principle in General Domains,  J. Differential Equations, 119 (1995), 450-472.
    
\bibitem{CT} X. Cabr\'{e}, J. Tan, Positive solutions of nonlinear problems involving the square root of the Laplacian, Adv.   Math.,   224 (2010), 2052-2093.

\bibitem{CS}   L. Caffarelli, L. Silvestre,   An extension problem related to the fractional Laplacian,    Comm. Partial Differential Equations, 32 (2007), 1245-1260.

\bibitem{CL} W. Chen, C. Li, Maximum principles for the fractional $p$-Laplacian and symmetry of solutions, Adv. Math.,   335 (2018), 735-758.

\bibitem{CLL}  W. Chen, C. Li and Y. Li, A drirect method of moving planes for the fractional Laplacian, Adv. Math., 308 (2017), 404-437.


\bibitem{CLLg}  W. Chen, C. Li, G. Li, Maximum principles for a fully nonlinear fractional order equation and symmetry of solutions, Calc. Var. Partial Differential Equations,   56 (2017):29.

\bibitem{CLM}  W. Chen, Y. Li, P. Ma,   The fractional Laplacian,     World
Scientific Publishing   Company, 2020.

\bibitem{CLO} W. Chen, C. Li,  B. Ou, Classification of solutions for an integral equation, Comm. Pure Appl. Math., 59 (2006), 330-343.

\bibitem{CLQ} W. Chen, C. Li, S. Qi,  A Hopf lemma and regularity for fractional $p$-Laplacians, arXiv:1805.02776.

\bibitem{CLZr} W. Chen, Y. Li, R. Zhang, A direct method of moving spheres on fractional order equations, J. Funct. Anal.,    272 (2017), 4131-4157.

 \bibitem{CLZ} W. Chen, C. Li, J. Zhu,   Fractional equations with indefinite nonlinearities,   Discrete Contin. Dyn. Syst., 39 (2019), 1257-1268.


\bibitem{CQ} W. Chen, S. Qi, Direct methods on fractional equations, Discrete Contin. Dyn. Syst.-A, 39 (2019), 1269-1310.



 \bibitem{CWNH} W. Chen,   P. Wang, Y. Niu, Y. Hu,  Asymptotic method of moving planes for fractional  parabolic  equations,  arXiv:2006.14094, 2020.



\bibitem{CW3} W. Chen, L. Wu, A maximum principle on unbounded domains and a Liouville theorem for fractional $p$-harmonic functions,   arXiv:1905.09986, 2019.

 \bibitem{CZ1} W. Chen, J. Zhu, Indefinite fractional elliptic problem and Liouville theorems,  J.  Differential Equations,   260 (2016), 4758-4785.

\bibitem{DLL} W. Dai, Z. Liu, G. Lu, Hardy-Sobolev type integral systems with Dirichlet boundary conditions in a half space, Commun. Pure Appl. Anal., 16 (2017), 1253-1264.

\bibitem{DQ} W. Dai, G. Qin, Classification of nonnegative classical solutions to third-order equations,  Adv. Math., 328 (2018), 822-857.

\bibitem{DQW} W. Dai, G. Qin, D. Wu,
Direct methods for pseudo-relativistic Schr\"{o}dinger operators, arXiv:2002.09924.

\bibitem{XFR}   X. Fern\'{a}ndez-Real, X. Ros-Oton, Regularity theory for general stable operators: parabolic equations, J. Funct. Anal.,   272 (2017), 4165-4221.

 \bibitem{G} R. K. Getoor, First passage times for symmetric stable processes in space, Transactions of the
American Mathematical Society, 101 (1961), 75-90.

\bibitem{H} E. Hopf, Elementare Bemerkungen \"{u}ber die     L\"{o}sungen partieller Differentialgleichngen zweiter Ordnung vom elliptischen Typus, Sitz. Ber. Preuss. Akad. Wissensch. Berlin, Math.-Phys. Kl, 19 (1927),  147-152.

\bibitem{JL} L. Jin,  Y. Li, A Hopf's lemma and the boundary regularity for the fractional $p$-Laplacian,
  Disc. Cont. Dyn. Sys., 39 (2019), 1477-1495.

 \bibitem{L1}  C. Li, Monotonicity and symmetry of solutions of fully nonlinear elliptic equations on bounded domains, Comm. Partial Differential Equations, 16 (1991),   491-526.

 \bibitem{LZm} Y. Li, M. Zhu, Uniqueness theorems through the method of moving spheres,  Duke math. J.,
               80 (1995),  383-418.

 \bibitem{LC} C. Li, W. Chen, A Hopf type  lemma for fractional equations, Proc. Amer. Math. Soc., 147 (2019), 1565-1575.

\bibitem{LN}   Y. Li, L. Nirenberg, On the Hopf lemma,  arXiv:0709.3531,  2007

\bibitem{LZ} Z. Li, Q. Zhang,
 Sub-solutions and Hopf's Theorem of Fractional p-Laplacian, arXiv:1905.00127.



\bibitem{LiuC} Z. Liu, W. Chen, Maximum principles and monotonicity of solutions for fractional $p$-equations in unbounded domains,   arXiv:1905.06493, 2019.

\bibitem{LZ2} G. Lu, J. Zhu, Symmetry and regularity of extremals of an integral equation related to the Hardy-Sobolev inequality, Calc. Var. Partial Differential Equations, 42 (2011), 563-577.



\bibitem{LyL} Y. L\"{u}, Z. L\"{u}, Some properties of solutions to the weighted Hardy-Littlewood-Sobolev type integral system, Discrete Contin. Dyn. Syst., 36 (2016), 3791-3810.







\bibitem{PW} M. H. Protter,   H. F. Weinberger, Maximum Principles in Differential Equations, Springer,
1984.
\bibitem{WN} P. Wang, P. Niu, Liouville's theorem for a fractional elliptic system,   Discrete Contin. Dyn. Syst.,  39 (2019),  1545-1558.
    
\bibitem{WW} J. Wei, T. Weth, Radial solutions and phase separation in a system of two coupled Schr\"{o}dinger equations, Arch. ration. mech.   anal.,   190 (2008),  83-106.

\bibitem{WC1} L. Wu, W. Chen, Monotonicity of solutions for fractional equations with De Giorgi type nonlinearities, arXiv:1905.09999, 2019.

\bibitem{WC2} L. Wu, W. Chen,  The sliding methods for the fractional $p$-Laplacian. Adv. Math., 361 (2020), 106933.35.


\end{thebibliography}
\end{document}